
\documentclass[12 pt]{elsarticle}
\usepackage{setspace}
\doublespacing
\usepackage{geometry}
\geometry{margin=3cm}

\usepackage{hyperref}
\usepackage{amsmath}
\usepackage{amssymb}
\usepackage{amsthm}
\usepackage{graphicx}
\usepackage{algorithmic}
\usepackage{algorithm}
\usepackage{caption}
\usepackage{subcaption}

\newtheorem{theorem}{Theorem}{}
\newtheorem{corollary}{Corollary}{}
\newtheorem{remark}{Remark}{}
\newtheorem{lemma}{Lemma}{}
\newtheorem{proposition}{Proposition}{}

\newcommand \E{\mathbb{E}}
\newcommand \Esp[1]{{\E\left[#1\right]}}
\newcommand{\Real}{{\Re}}
\newcommand{\Imag}{{\Im}}
\newcommand{\busSet}{\mathcal{B}}

\newcommand{\timeSet}{\mathcal{T}}

\newcommand{\vlin}{v^{\textrm{Lin}}}
\newcommand{\Slin}{S^{\textrm{Lin}}}
\newcommand{\vlinbar}{\bar{v}^{\textrm{Lin}}}
\newcommand{\Slinbar}{\bar{S}^{\textrm{Lin}}}
\newcommand{\slin}{s^{\textrm{Lin}}}
\newcommand{\Plin}{P^{\textrm{Lin}}}
\newcommand{\Qlin}{Q^{\textrm{Lin}}}
\newcommand{\dd}{{\textrm{d}}}


\journal{Electric Power Systems Research}









\bibliographystyle{elsarticle-num}

\begin{document}

\begin{frontmatter}

\title{Multi-stage Stochastic Alternating Current Optimal Power Flow with Storage: Bounding the Relaxation Gap}


\author[mymainaddress,mysecondaryaddress]{Maxime Grangereau\corref{mycorrespondingauthor}}
\ead{maxime.grangereau@gmail.com; maxime.grangereau@edf.fr; maxime.grangereau@polytechnique.edu}
\cortext[mycorrespondingauthor]{Corresponding author}

\author[mysecondaryaddress]{Wim van Ackooij}
\ead{wim.van-ackooij@edf.fr}

\author[mymainaddress,mythirdaddress]{Stéphane Gaubert}
\ead{stephane.gaubert@inria.fr}

\address[mymainaddress]{Ecole polytechnique, IP Paris, CMAP, CNRS, Route de Saclay, 91300 Palaiseau}
\address[mysecondaryaddress]{EDF Lab Saclay, Route de Saclay 91300 Palaiseau}
\address[mythirdaddress]{INRIA Saclay, Route de Saclay, 91300 Palaiseau}

\begin{abstract}
  We propose a generic multistage stochastic model for the Alternating Current Optimal Power Flow (AC OPF) problem {for radial distribution networks}, to account for the random electricity production of renewable energy sources and dynamic constraints of storage systems. We consider single-phase radial networks. Radial three-phase balanced networks (medium-voltage distribution networks typically have this structure) reduce to the former case. 
  This induces a large scale optimization problem, which, given the non-convex nature of the AC OPF, is generally challenging to solve to global optimality. We derive a priori conditions guaranteeing a vanishing relaxation gap for the multi-stage AC OPF problem, which can thus be solved using convex optimization algorithms. We also give an a posteriori upper bound on the relaxation gap. In particular, we show that a null or low relaxation gap may be expected for applications with light reverse power flows or if sufficient storage capacities with low cost are available. 
Then, we discuss the validity of our results when incorporating voltage regulation devices. Finally, we illustrate our results on problems of planning of a realistic distribution feeder with distributed solar production and storage systems. 
Scenario trees for solar production are constructed from a stochastic model, by a quantile-based algorithm.
\end{abstract}

\begin{keyword}
Optimal Power Flow, multistage stochastic optimization, convex relaxation, second-order cone programming, scenario trees
\end{keyword}

\end{frontmatter}


\section{Introduction}
\subsection{Motivation}
Distribution networks are currently facing a major change owing to the increasing share of decentralized Renewable Energy Sources (RES). They can create local physical violations and induce uncertainty on the network operating point: their production levels are random, as a result of the weather. To cope with these issues, many Distribution Network Operators (DNOs) are required to become able to respond locally to unforeseen events. This can be done by the means of energy flexibilities located at the nodes of the network, like storage systems. The dynamical and sometimes uncertain nature of these subsystems as well as the uncertainty brought by RES should be taken into account in the operation planning tools used by DNOs. One of these relevant tools is the so-called Optimal Power Flow (OPF) problem. It is a mathematical optimization problem which aims at finding an operating point of a power network that minimizes a given objective function, such as generation costs, active power losses, subject to constraints on power injections and losses, voltage magnitudes and intensities in the lines. 
We are interested in the Alternating Current Optimal Power Flow (AC OPF) problem which is an accurate physical model of the operating point of a network. It is non-convex and traditionally used in a static deterministic framework. However, as argued before, a {\em stochastic} and {\em dynamic} framework with storage systems and RES is of practical interest and this is the focus of the present paper.

\subsection{Optimal Power Flow Problem}
There exist two main declinations of the AC OPF problem: the Bus Injection Model (BIM), and the Branch Flow Model (BFM), both presented in \cite{low14_1}.
Both formulations are equivalent for radial connected networks \cite{ding:19}. In this paper, we choose the BFM formulation since it makes our arguments easier to present. The AC OPF problem is a non-convex optimization problem, recently shown to be strongly NP-hard \cite{bien:19}. 
However, a series of recent works initiated by the seminal paper \cite{lava:11:1} have shown that many real-world instances can be solved to global optimality using convex relaxations of the original problem.


A very extensive overview on relaxations and approximations of AC Power flow equations can be found in the book \cite{molz:19},
while a particular focus on conic relaxations is given in \cite{zohr:josz:jin:mada:lava:sojo:20}. We focus here on results on the conic relaxations of AC OPF problems for single phase radial networks. These models allow to treat the case of balanced three-phase radial networks, {like medium voltage distribution networks (20 kV) in France}, which reduces to the former case. The most famous conic relaxations of the AC OPF Problem are the Second-Order Cone (SOC) relaxation and the Semi-Definite (SD) relaxation. Both are presented in details in \cite{low14_1}. For radial networks, the SOC and SD relaxations are equivalent, but the SOC relaxation exhibits better numerical performance, and is preferable for such topologies. 
Conic relaxations are often used in the literature owing to their enhanced numerical performance and for the certificate of optimality they may provide, compared with the non-convex formulation. 
They are exact for many practical instances of OPF problem \cite{lava:11:1}. For this reason, many authors have studied exactness conditions for these convex relaxations. A posteriori conditions on the solution of the dual problem are given in \cite{lava:11:1}. 
The relaxation is exact for radial connected network under assumptions like over-generation \cite{lava:12}, load over-satisfaction \cite{fari:13}, or no upper bounds on voltage magnitude \cite{gan:12}. 
As these conditions are not verified by practical instances, other authors \cite{gan:14,huan:16} have obtained more realistic a priori exactness conditions.

\subsection{Contributions}

In this paper, we develop a generic model for the multi-stage stochastic version of the AC Optimal Power Flow problem with storage systems and intermittent RES. Multistage stochastic models allow to ensure the non-anticipativity of decision variables and are a pre-requisite in order to prevent decisions to depend on the realization of yet unknown data. These models require the formulation of a scenario tree, the size of which, has to grow exponentially with the number of time stages \cite{pflu:pich:14}. 
The multistage stochastic AC OPF problem is therefore a non-convex and large-scale problem. To alleviate the non-convexity issue, we consider the SOC relaxation of the problem. Our main contribution is to show that approaches guaranteeing the absence of relaxation gap for the AC OPF problem, originally developed in a static deterministic setting, can be extended to the multi-stage stochastic setting. Inspired by the approach of \cite{huan:16} in the deterministic case, we propose to restrict the feasible set of the problem by adding a finite number of linear constraints, which impose feasibility of a linearized power flow and compensations for active or reactive reverse power flows in the network. We show that the restricted problem has the same optimal value as the original one under realistic assumptions which can easily be checked a priori, see Proposition \ref{prop:suff:cond:equal:p:restricted}. The restricted problem has no relaxation gap, and hence its optimal value is easily computable, see Theorem \ref{theorem:opf:zero:duality:gap:restricted}. This allows us to easily compute a feasible solution of the original problem and an a posteriori upper bound on its relaxation gap, see Theorem \ref{theorem:a:posteriori:bound:relaxation:gap}. Besides, the result provides realistic and tractable a priori conditions guaranteeing that the relaxation gap of the original problem is zero, see Theorem \ref{theorem:opf:zero:duality:gap}. {Validity of our result when considering voltage regulation devices such as uncontrollable Voltage Regulation Transformers, SVCs and STATCOMs is also shown.} The interest of {our results} is numerically illustrated on a realistic distribution network with 56 buses found in \cite{fari:neal:clar:low:12}, equipped with distributed storage and solar panels. We generate the scenario trees for solar production by a quantile-based algorithm, based on a stochastic model of solar irradiance.

\subsection{Related work}
By comparison with other works guaranteeing zero relaxation gap for the AC OPF problem \cite{lava:11:1,lava:12,fari:13,gan:12,gan:14,huan:16}, we consider a multistage stochastic setting. The conditions given in our paper extend to the multistage stochastic case and to more general cost functionals the realistic zero relaxation gap conditions given in \cite{huan:16}. Moreover, we use this approach to provide a posteriori bounds on the relaxation gap.

Several works consider a deterministic dynamic AC OPF model, like \cite{grov:18}, which considers the SOC relaxation.

Some recent research proposes probabilistic (indifferently called chance-constrained) versions of the OPF problem. A probabilistic AC OPF model linearized around a reference scenario is studied in \cite{roal:17:2}. Other works consider the robust counterpart of the SD relaxation restricted to affine-linear decision-rules \cite{vrak:13}. A Semi-Definite convex relaxation of the chance-constrained AC OPF problem is proposed in \cite{venz:hali:mark:17} using a scenario based-approach and assuming piece-wise linear decision rules, or assuming Gaussian uncertainty. A Second-Order Cone approximation of the chance-constrained AC OPF is proposed in \cite{hali:chatz:pins:18} 
which allows good numerical performances, combined with a feasibility recovery method. None of these references account for storage systems, nor do they consider dynamical aspects of the problem.

Several other works deal with dynamic stochastic models. In particular, an important requirement for such problems is to ensure that decision variables remain non-anticipative, i.e., do not depend on yet unobserved random data. References \cite{swam:17} considers the simplified case where decision are non-anticipative for the initial time steps only. Non-anticipativity is guaranteed in \cite{jabr:14} using affine-linear policies but with a (linear) DC OPF model. Non-anticipativity of the decisions is also guaranteed in \cite{sun:16} which considers an iterative procedure to optimize a decision policy. By comparison, the approach by scenario trees developed here accounts both for non-anticipativity constraints and for the nonlinearity of the stochastic OPF problem, whereas it benefits from theoretical convergence guarantees. Indeed, scenario tree methods provide an approximation of the value of the original problem with continuous distribution of the random data, and this approximation converges to this value when the number of scenarios goes to infinity \cite{pflu:pich:14}.

\subsection{Outline of the paper}
This paper is organized as follows. Section \ref{sec:opf:model} presents the multi-stage stochastic AC OPF problem. Section \ref{sec:opf:restrict} introduces a restriction of this problem and gives conditions ensuring equality of the values of the original and restricted problems. Section \ref{sec:opf:np:relax:gap} presents the main result of this paper: the restricted problem has no relaxation gap. This provides a convenient way to establish a priori exactness of the SOC relaxation for particular instances of the original problem, or an easily computable bound on the relaxation gap. {Section \ref{sec:discussion:real:networks} discusses validity and possible extensions of our result when incorporating voltage regulation devices or when considering multi-phase unbalanced networks.} We illustrate numerically our results on a realistic distribution networks with 56 buses in Section \ref{sec:opf:numerics}.

\subsection{Notation}
The set of real numbers is denoted by $\mathbb{R}$ and the set of complex numbers by $\mathbb{C}$. The notation $\textbf{i}$ stands for the purely imaginary number such that $\textbf{i}^2 = -1$. For $z \in \mathbb{C}$, $\Re(z)$ stands for its real part, $\Im(z)$ for its imaginary part, $z^*$ for its complex conjugate and $|z|$ for its modulus.
The set of time steps is denoted by $\timeSet$, the set of buses by $\busSet$, the set of edges by $\mathcal{E}$ (directed towards the root of the tree for acyclic networks), the set of scenarios by $\Omega$.
The impedance of an edge $\overrightarrow{(i,j)} \in \mathcal{E}$ is denoted by $z_{i,j} \in \mathbb{C}$, with resistance $r_{i,j} := \Re(z_{i,j})$, and reactance $x_{i,j} := \Im(z_{i,j})$.
We denote vectors indexed by a set $I$ according to $a:=(a_{i})_{i \in I}$.

\section{The multistage stochastic AC OPF model} \label{sec:opf:model}
\subsection{Formulation of the problem}
Throughout the work, we will make the assumption that the network is radial, connected and passive, i.e., $\Real(z_{i,j}) \geq 0$ and $\Imag(z_{i,j}) \geq 0$ for all lines $\overrightarrow{(i,j)} \in \mathcal{E}$ of the network. We formulate a multi-stage stochastic AC OPF problem with a battery storage system at each bus of the network (except for the reference bus $0$) using the Branch Flow Model{, presented in \cite{low14_1,fari:neal:clar:low:12}, which we extend to the multi-stage stochastic case}. 
We next describe the multistage stochastic AC OPF problem in Branch Flow Model formulation. We impose the following constraints. First, we consider the constraints on voltage squared magnitudes $v$:
\begin{align}
&v_{0,t,\omega} = 1, \quad t \in \timeSet,  \omega \in \Omega, \label{eq:constr:BFM:voltage:slack:multistage}\\
& \underline{v}_i \leq v_{i,t,\omega} \leq \overline{v}_i, \quad  i \in \busSet \setminus \{0\},  t \in \timeSet,  \omega \in \Omega.\label{eq:constr:BFM:voltage:bound:multistage}
\end{align}
We also incorporate bounds on intensity squared magnitude $\mathcal{I}$:
\begin{align}
0 \leq \mathcal{I}_{\overrightarrow{(i,j)},t,\omega} \leq \overline{\mathcal{I}}_{\overrightarrow{(i,j)}}, \quad  \overrightarrow{(i,j)} \in \mathcal{E},  t \in \timeSet,  \omega \in \Omega.\label{eq:constr:BFM:intensity:bound}
\end{align}
We consider bounds on sending-end power flow $S$ magnitudes in the lines of the network, which is a convex quadratic constraint:
\begin{align}
\left|S_{\overrightarrow{(i,j)},t,\omega}\right| \leq \overline{S}_{\overrightarrow{(i,j)}}, \quad  \overrightarrow{(i,j)} \in \mathcal{E},  t \in \timeSet,  \omega \in \Omega.\label{eq:constr:BFM:power:magnitude:bound}
\end{align}
We consider bound constraints on active power injected $p^{\textrm{inj}}$, absorbed $p^{\textrm{abs}}$ by batteries and flexible reactive power injected $q$ {(which can account for SVCs and STATCOMs, \cite{liu:li:wu:orte:17})}:
\begin{align}
& 0 \leq p^{\textrm{inj}}_{i,t,\omega} \leq \overline{p}^{\textrm{inj}}_{i}, \quad  i \in \busSet \setminus \{0\},  t \in \timeSet,  \omega \in \Omega,\label{eq:constr:BFM:power:battery:supply:bound}\\
& 0 \leq p^{\textrm{abs}}_{i,t,\omega} \leq \overline{p}^{\textrm{abs}}_{i}, \quad  i \in \busSet \setminus \{0\}, t \in \timeSet,  \omega \in \Omega,\label{eq:constr:BFM:power:battery:absorb:bound}\\
& \underline{q}_i \leq q_{i,t,\omega} \leq \overline{q}_i, \quad  i \in \busSet \setminus \{0\},   t \in \timeSet,  \omega \in \Omega.\label{eq:constr:BFM:power:battery:reactive:bound}
\end{align}
We introduce the constraints on the states of charge of the batteries $X$, which represent respectively their dynamics (accounting for different efficiencies when charging and discharging the batteries), their initial values and their physical bounds for all $i \in \busSet \setminus \{0\}, \omega \in \Omega${, denoting $\Delta_t$ the time duration of step $t \in \timeSet$}:
\begin{align}
& X_{i,t+1,\omega} = X_{i,t,\omega} + \rho^{\textrm{abs}}_i p^{\textrm{abs}}_{i,t,\omega}\Delta_t  - \rho^{\textrm{inj}}_i p^{\textrm{inj}}_{i,t,\omega} \Delta_t , \  t \in \timeSet \setminus \{T\}, \label{eq:constr:BFM:soc:dynamic}\\
& X_{i,0,\omega} = x_{i} , \label{eq:constr:BFM:soc:init}\\
& \underline{X}_i \leq X_{i,t,\omega} \leq \overline{X}_i , \quad t \in \timeSet.\label{eq:constr:BFM:soc:bounds}
\end{align}
Then we consider the expression of the complex power injections $s$ at all buses of the network $i \in \busSet \setminus \{0\}$, for all time steps $t \in \timeSet$ and scenario $\omega \in \Omega$:
\begin{align}
& s_{i,t,\omega} = p^{\textrm{inj}}_{i,t,\omega} - p^{\textrm{abs}}_{i,t,\omega} + \textbf{i} q_{i,t,\omega} - s^d_{i,t,\omega}.\label{eq:constr:BFM:power:decomp}
\end{align}
This allows to formulate the power balance equations at the non-slack buses and the slack bus $0$, for all $\overrightarrow{(i,j)} \in \mathcal{E},  t \in \timeSet,  \omega \in \Omega$:
\begin{align}
& S_{\overrightarrow{(i,j)},t,\omega} = \sum_{\overrightarrow{(k,i)} \in \mathcal{E}} (S_{\overrightarrow{(k,i)},t,\omega} - z_{k,i} \mathcal{I}_{\overrightarrow{(k,i)},t,\omega}) + s_{i,t,\omega},\label{eq:constr:BFM:power:balance:multistage}\\ 
&  0 = \sum_{\overrightarrow{(k,0)} \in \mathcal{E}} (S_{\overrightarrow{(k,0)},t,\omega} - z_{k,0} \mathcal{I}_{\overrightarrow{(k,0)},t,\omega}) + s_{0,t,\omega}.\label{eq:constr:BFM:power:balance:slack:multistage}
\end{align}
We consider also the voltage propagation constraint and the constraint making the link between voltage squared magnitudes $v$, intensity squared magnitudes $\mathcal{I}$ and sending-end power flows $S$ in the network, for all $\overrightarrow{(i,j)} \in \mathcal{E},  t \in \timeSet,  \omega \in \Omega$:
\begin{align}
& v_{i,t,\omega} - v_{j,t,\omega} = 2 \Real(z_{i,j}^* S_{\overrightarrow{(i,j)},t,\omega}) - |z_{i,j}|^2 \mathcal{I}_{\overrightarrow{(i,j)},t,\omega}, 
\label{eq:constr:BFM:voltage:prop:multistage}\\
& v_{i,t,\omega} \mathcal{I}_{\overrightarrow{(i,j)},t,\omega} = \vert S_{\overrightarrow{(i,j)},t,\omega} \vert^2.\label{eq:constr:BFM:non:convex:multistage}
\end{align}

Constraint \eqref{eq:constr:BFM:non:convex:multistage} is a non-convex quadratic equality constraint.
Last, we consider the non-anticipativity constraint, which encodes the fact that decision variables $y$ should not depend on yet unknown realization of random data of the problem:
\begin{align}
& y \text{ is non anticipative}. \label{eq:constr:BFM:non:anticipativity}
\end{align}
More details shall be given later on the formulation of the non-anticipativity constraints. 
We consider a general (possibly random, progressively-measurable) convex cost function $C$ depending on all decision variables of the problem. We can now formulate the multi-stage stochastic AC-OPF problem:
\begin{align*}
\min_{y=(s_0, s, p^{\textrm{inj}}, p^{\textrm{abs}}, q, X, S,\mathcal{I}, v)} & \Esp{ C(s_0, p^{\textrm{inj}}, p^{\textrm{abs}}, q, S, \mathcal{I}, X)}\\
s.t. \quad &\eqref{eq:constr:BFM:voltage:slack:multistage} - \eqref{eq:constr:BFM:non:anticipativity}.
\end{align*}
We denote this optimization problem by $(P)$. It is a non-convex problem owing to Constraint \eqref{eq:constr:BFM:non:convex:multistage}. {Decision variables are listed and described in Table \ref{tab:decision:var}.}

\begin{table}[hbtp!]
	\centering
	\begin{tabular}{|c|c|}
		\hline
		Decision variable  & Description \\
		\hline
		$s_0 = (s_{0,t,\omega})_{t \in \timeSet,\omega \in\Omega}$ & Apparent power injections at bus $0$\\
		$s = (s_{i,t,\omega})_{i \in \busSet,t \in \timeSet,\omega}$ & Apparent power injections\\
		$p^{\textrm{inj}} = (p^{\textrm{inj}}_{i,t,\omega})_{i \in \busSet,t \in \timeSet,\omega}$ & Injected active power by storage systems\\
		$p^{\textrm{abs}} = (p^{\textrm{abs}}_{i,t,\omega})_{i \in \busSet,t \in \timeSet,\omega}$ & Absorbed active power by storage systems\\
		$q = (q_{i,t,\omega})_{i \in \busSet,t \in \timeSet,\omega}$ & Injected reactive power by flexible systems\\
		$S =(S_{\overrightarrow{(i,j)},t,\omega})_{\overrightarrow{(i,j)} \in \mathcal{E},t \in \timeSet,\omega \in \Omega}$ & Apparent power in lines\\
		$\mathcal{I}=(\mathcal{I}_{\overrightarrow{(i,j)},t,\omega})_{\overrightarrow{(i,j)} \in \mathcal{E},t \in \timeSet,\omega \in \Omega}$ & Squared magnitudes of intensities\\
		$v= (v_{i,t,\omega})_{i \in \busSet,t \in \timeSet,\omega}$ & Squared magnitudes of voltages\\
		\hline
	\end{tabular}
	\caption{Decision variables}
	\label{tab:decision:var}
\end{table}

\subsection{On the formulation of the non-anticipativity constraints}\label{subsec:scenario:tree}

We define a scenario tree, which allows to formulate a problem with a finite number of scenarios while accounting for the filtration structure. Each scenario $\omega \in \Omega$ is associated with a trajectory $\xi_{\omega} = (\xi_{t,\omega})_{t \in \timeSet}$ of the exogenous random process impacting the system and can be visualized as a path from the root to the leaves of the scenario tree. Scenarios $\omega$ and $\omega'$ are said to be indistinguishable up to time $t$ if $\xi_{\tau,\omega} = \xi_{\tau,\omega'}$ for any $\tau \leq t$. If $x = (x_{t,\omega})_{t \in \timeSet,\omega \in \Omega}$ denotes the decision variable of a multi-stage stochastic problem, and $x_{t,\omega}$ denotes the decision taken at time $t$ for scenario $\omega$, non-anticipativity can be expressed by the following constraint for all time $t \in \timeSet$ and all scenarios $\omega$ and $\omega'$ indistinguishable up to time $t$: $ x_{t,\omega} = x_{t,\omega'}$. Good scenario trees should grow exponentially fast with the number of time stages \cite{pflu:pich:14} to appropriately approximate the distribution and the filtration generated by the random noise.


\subsection{Second-Order Cone relaxation of the problem}

As we already observed, constraint \eqref{eq:constr:BFM:non:convex:multistage} is non-convex. Relaxing it into an inequality constraint:
\begin{align}
v_{i,t,\omega}\mathcal{I}_{\overrightarrow{(i,j)},t,\omega} \geq \vert S_{\overrightarrow{(i,j)},t,\omega} \vert^2,\quad  \overrightarrow{(i,j)} \in \mathcal{E}, t \in \mathcal{T}, \omega \in \Omega, \label{eq:constr:BFM:soc:multistage}
\end{align}
yields a convex problem, denoted $(P_{\textrm{SOC}})$. This problem is called the Second-Order Cone Relaxation of the problem, and is given by:
\begin{align*}
\min_{y=(s_0, s, p^{\textrm{inj}}, p^{\textrm{abs}}, q, X, S,\mathcal{I}, v)}\quad& \Esp{ C(s_0, p^{\textrm{inj}}, p^{\textrm{abs}}, q, S, \mathcal{I}, X)}\\
s.t. \quad & \eqref{eq:constr:BFM:voltage:slack:multistage} - \eqref{eq:constr:BFM:voltage:prop:multistage}, \eqref{eq:constr:BFM:non:anticipativity}, \eqref{eq:constr:BFM:soc:multistage}.
\end{align*}

Indeed, \eqref{eq:constr:BFM:soc:multistage} has the structure of a rotated second-order cone constraint $x_1 x_2 \geq x_3^2 + x_4^2$.

\section{Restriction of the feasible set}\label{sec:opf:restrict}

\subsection{Presentation of the problem with restricted feasible set}

We present a variant of Problems $(P)$ and $(P_{\textrm{SOC}})$, obtained by adding a finite number of linear inequalities. Consider additional variables $\vlin = (\vlin_{i,t,\omega})_{i \in \busSet,t \in \timeSet,\omega \in \Omega}$, which plays the role of the square voltage magnitude variable, $\Slin = (\Slin_{\overrightarrow{(i,j)},t,\omega})_{\overrightarrow{(i,j)} \in \mathcal{E},t \in \timeSet,\omega \in \Omega}$, which plays the role of the sending-end power flow variable, and $\slin_0 = (\slin_{0,t,\omega})_{t \in \timeSet,\omega \in \Omega}$, which plays the role of the power injections at the slack bus $0$. 
We first consider the constraints of the Linearized DistFlow model \cite{bara:wu:89:2}:
\begin{align}
&\vlin_{0,t,\omega} = 1, \quad t \in \timeSet,  \omega \in \Omega, \label{eq:constr:BFM:voltage:slack:lin:multistage}\\
& \vlin_{i,t,\omega} \leq \overline{v}_i, \quad  i \in \busSet,   t \in \timeSet,  \omega \in \Omega, \label{eq:constr:BFM:voltage:bound:lin:multistage}\\ 
&\Slin_{\overrightarrow{(i,j)},t,\omega} = \sum_{\overrightarrow{(k,i)} \in \mathcal{E}} \Slin_{\overrightarrow{(k,i)},t,\omega}  + s_{i,t,\omega}, \quad  \overrightarrow{(i,j)} \in \mathcal{E},   t \in \timeSet,  \omega \in \Omega, \label{eq:constr:BFM:power:balance:lin:multistage}\\ 
&  0 = \sum_{\overrightarrow{(k,0)} \in \mathcal{E}} \Slin_{\overrightarrow{(k,0)},t,\omega} + \slin_{0,t,\omega}, \quad  t \in \timeSet,  \omega \in \Omega, \label{eq:constr:BFM:power:balance:slack:lin:multistage}\\
&\vlin_{i,t,\omega} - \vlin_{j,t,\omega} = 2 \Real(z_{i,j}^* \Slin_{\overrightarrow{(i,j)},t,\omega}), \quad  \overrightarrow{(i,j)} \in \mathcal{E},  t \in \timeSet,  \omega \in \Omega.\label{eq:constr:BFM:voltage:prop:lin:multistage}
\end{align}


For all buses $i$, define $\mathcal{E}_i$ as the set of directed edges belonging to the sub-tree starting from $i$, as shown in Figure \ref{fig:subtree:2}. We recall that edges of the network are directed towards the slack bus $0$.
\begin{figure}[h!]
	\centering
	\includegraphics[width = 0.49 \textwidth]{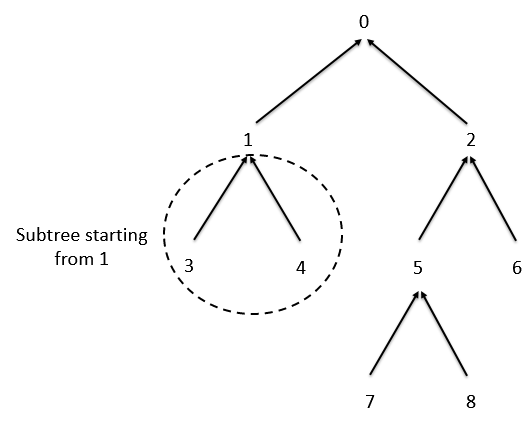}
	\caption{Example of sub-tree $\mathcal{E}_1$}
	\label{fig:subtree:2}
\end{figure}

Consider the additional constraint:
\begin{align}
&\Real(z_{k,l}^* \Slin_{\overrightarrow{(i,j)},t,\omega}) \leq 0 , \quad  \overrightarrow{(i,j)} \in \mathcal{E},  \overrightarrow{(k,l)} \in \mathcal{E}_i,  t \in \timeSet,  \omega \in \Omega.\label{eq:constr:BFM:additional:constr:multistage}
\end{align}
This constraint imposes compensation of active (resp. reactive) reverse power flows in the lines of the network by forward reactive (resp. active) power flow along the same lines. Denoting $y:=(s_0, s, p^{\textrm{inj}}, p^{\textrm{abs}}, q, X, S,\mathcal{I}, v, \Slin,\vlin,\slin_0)$ as the vector of all decision variables, we can now introduce the problem $(P')$:
\begin{align*}
\min_{y}\quad &\Esp{ C(s_0, p^{\textrm{inj}}, p^{\textrm{abs}}, q, S, \mathcal{I}, X)}\\
s.t. \quad & \eqref{eq:constr:BFM:voltage:slack:multistage} - \eqref{eq:constr:BFM:non:anticipativity}, \eqref{eq:constr:BFM:voltage:slack:lin:multistage} - \eqref{eq:constr:BFM:additional:constr:multistage}.
\end{align*}
In particular, the value of $(P')$ is an upper bound on the value of $(P)$: $val(P) \leq val(P')$.

\subsection{Second-order cone relaxation of the problem with restricted feasible set}
Similarly as before, we can introduce the second-order cone relaxation of the new problem $(P')$ by replacing the non-convex quadratic equality constraints \eqref{eq:constr:BFM:non:convex:multistage} by the rotated second-order cone constraints \eqref{eq:constr:BFM:soc:multistage}. This convex relaxation is denoted $(P'_{\textrm{SOC}})$ and can be efficiently solved.
%

\subsection{Conditions ensuring equality of the feasible sets of the original and restricted problems}\label{subsec:opf:equal:restrict:origin}
We show that, under realistic and easily verifiable a priori conditions, the feasible sets of $(P')$ and $(P)$ coincide. Denote $\overline{s}_{i,t,\omega} = \overline{p}^{\textrm{inj}}_i + \textbf{i} \overline{q}_i - s^d_{i,t,\omega}$ an upper bound on total power injections at bus $i$ at time step $t$ for scenario $\omega$, obtained for instance using Constraints \eqref{eq:constr:BFM:power:battery:supply:bound}, \eqref{eq:constr:BFM:power:battery:absorb:bound}, \eqref{eq:constr:BFM:power:battery:reactive:bound} and \eqref{eq:constr:BFM:power:decomp}. For $\omega \in \Omega$, $t \in \timeSet$, define $\vlinbar=(\vlinbar_{t,\omega})_{t \in \timeSet,\omega \in \Omega}$ and $\Slinbar=( \Slinbar_{t,\omega}) _{t \in \timeSet,\omega \in \Omega}$ by:
\begin{align}
\begin{cases}\label{eq:bound:vlin:slin:system}
\Slinbar_{\overrightarrow{(i,j)},t,\omega} = \sum_{\overrightarrow{(k,i)} \in \mathcal{E}} \Slinbar_{\overrightarrow{(k,i)},t,\omega}  + \overline{s}_{i,t,\omega}, \quad  &\overrightarrow{(i,j)} \in \mathcal{E}, \\ 
\vlinbar_{0,t,\omega} = 1, \\
\vlinbar_{i,t,\omega} - \vlinbar_{j,t,\omega} = 2 \Real(z_{i,j}^* \Slinbar_{\overrightarrow{(i,j)},t,\omega}), \quad  &\overrightarrow{(i,j)} \in \mathcal{E}.
\end{cases}
\end{align}

\begin{proposition}\label{prop:suff:cond:equal:p:restricted}
	Define $(\vlinbar,\Slinbar)$ as the unique solution of the system \eqref{eq:bound:vlin:slin:system}. Assume the network is radial, connected and passive (i.e., for all lines $\overrightarrow{(i,j)} \in \mathcal{E}$, it holds that $z_{i,j} \geq_{\mathbb{C}}0$) and moreover:
	\begin{align}\label{eq:suff:cond:equal:p:restricted}
	\begin{cases}
	\vlinbar_{i,t,\omega} \leq \overline{v}_i, \quad  &i \in \busSet,   t \in \timeSet,  \omega \in \Omega,\\
	\Real(z_{k,l}^* \Slinbar_{\overrightarrow{(i,j)},t,\omega}) \leq 0 , \quad  &\overrightarrow{(i,j)} \in \mathcal{E},  \overrightarrow{(k,l)} \in \mathcal{E}_i,  t \in \timeSet,  \omega \in \Omega.
	\end{cases}
	\end{align}
	For any feasible point $y$ of $(P)$ (resp. $(P_{\textrm{SOC}})$), define $(\slin_{0},\vlin,\Slin)$ as the unique solution of the linear system defined by \eqref{eq:constr:BFM:voltage:slack:lin:multistage}-\eqref{eq:constr:BFM:power:balance:lin:multistage}-\eqref{eq:constr:BFM:power:balance:slack:lin:multistage}-\eqref{eq:constr:BFM:voltage:prop:lin:multistage}.
	Then $y' := (y,\slin_0,\vlin,\Slin)$ is feasible with respect to $(P')$ (resp. $(P'_{\textrm{SOC}})$). In particular, $val(P)=val(P')$ and $val(P_{\textrm{SOC}}) = val(P'_{\textrm{SOC}})$.
\end{proposition}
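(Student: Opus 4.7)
The plan is to show the two halves of the extension separately: first that the appended linear system uniquely determines $(\slin_0,\vlin,\Slin)$ from $s$, and second that under the hypotheses on $(\vlinbar,\Slinbar)$ the computed $(\vlin,\Slin)$ automatically satisfies the two extra constraints \eqref{eq:constr:BFM:voltage:bound:lin:multistage} and \eqref{eq:constr:BFM:additional:constr:multistage}. The inequality $\mathrm{val}(P)\le\mathrm{val}(P')$ is immediate since $(P')$ has all constraints of $(P)$ plus more (after projecting out the new variables), so only the reverse inequality requires work, and it follows from the feasibility of the extension $y'$.

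First I would note that on a radial tree, for fixed $s$ the system \eqref{eq:constr:BFM:voltage:slack:lin:multistage}, \eqref{eq:constr:BFM:power:balance:lin:multistage}, \eqref{eq:constr:BFM:power:balance:slack:lin:multistage}, \eqref{eq:constr:BFM:voltage:prop:lin:multistage} admits a unique solution obtained recursively: a bottom-up sweep through the balance equations yields, by telescoping,
\begin{equation*}
\Slin_{\overrightarrow{(i,j)},t,\omega}=\sum_{k\in\mathcal{B}_i}s_{k,t,\omega},
\end{equation*}
where $\mathcal{B}_i$ denotes the set of buses in the sub-tree rooted at $i$; then a top-down sweep from $\vlin_{0,t,\omega}=1$ through \eqref{eq:constr:BFM:voltage:prop:lin:multistage} determines $\vlin$, and finally $\slin_0$ is read off from \eqref{eq:constr:BFM:power:balance:slack:lin:multistage}. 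The very same argument applied to the system \eqref{eq:bound:vlin:slin:system} gives $\Slinbar_{\overrightarrow{(i,j)},t,\omega}=\sum_{k\in\mathcal{B}_i}\overline{s}_{k,t,\omega}$, which in particular justifies the phrasing "the unique solution" in the statement.

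The core step is a componentwise monotonicity argument. Using the bound constraints \eqref{eq:constr:BFM:power:battery:supply:bound}, \eqref{eq:constr:BFM:power:battery:absorb:bound}, \eqref{eq:constr:BFM:power:battery:reactive:bound} and the decomposition \eqref{eq:constr:BFM:power:decomp}, one checks immediately that $\Real(s_{i,t,\omega})\le\Real(\overline{s}_{i,t,\omega})$ and $\Imag(s_{i,t,\omega})\le\Imag(\overline{s}_{i,t,\omega})$ for every bus, time, and scenario. Summing over $\mathcal{B}_i$ yields $\Real(\Slin_{\overrightarrow{(i,j)},t,\omega})\le\Real(\Slinbar_{\overrightarrow{(i,j)},t,\omega})$ and similarly for the imaginary parts. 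Because the network is passive, $r_{k,l}\ge 0$ and $x_{k,l}\ge 0$, and one has the identity $\Real(z_{k,l}^{*}S)=r_{k,l}\Real(S)+x_{k,l}\Imag(S)$, which is nondecreasing in both arguments. Applied to $(k,l)=(i,j)$ together with a top-down induction on depth starting from $\vlin_{0,t,\omega}=\vlinbar_{0,t,\omega}=1$, this yields $\vlin_{i,t,\omega}\le\vlinbar_{i,t,\omega}$; applied to a generic edge $\overrightarrow{(k,l)}\in\mathcal{E}_i$ it yields
\begin{equation*}
\Real(z_{k,l}^{*}\Slin_{\overrightarrow{(i,j)},t,\omega})\le\Real(z_{k,l}^{*}\Slinbar_{\overrightarrow{(i,j)},t,\omega}).
\end{equation*}
Combining with the hypothesis \eqref{eq:suff:cond:equal:p:restricted} then gives precisely \eqref{eq:constr:BFM:voltage:bound:lin:multistage} and \eqref{eq:constr:BFM:additional:constr:multistage}, while \eqref{eq:constr:BFM:voltage:slack:lin:multistage}, \eqref{eq:constr:BFM:power:balance:lin:multistage}, \eqref{eq:constr:BFM:power:balance:slack:lin:multistage}, \eqref{eq:constr:BFM:voltage:prop:lin:multistage} hold by construction. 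Hence $y'=(y,\slin_0,\vlin,\Slin)$ is feasible for $(P')$, and since the appended variables are determined purely from $s$ (which appears in the objective only through $s_0,p^{\textrm{inj}},p^{\textrm{abs}},q$ that are unchanged), the objective value is preserved.

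I do not expect any deep obstacle; the proof is essentially a monotone propagation argument on the tree, and the only point that needs a little care is to make sure that the real/imaginary componentwise inequality on the injections, not just some modulus bound, is the right notion of "$\le_{\mathbb{C}}$" here, so that the passivity hypothesis $r_{i,j},x_{i,j}\ge 0$ delivers monotonicity of all quantities $\Real(z^{*}\cdot)$ appearing both in \eqref{eq:constr:BFM:voltage:prop:lin:multistage} and in \eqref{eq:constr:BFM:additional:constr:multistage}. The same argument works verbatim if one starts from a feasible point of $(P_{\textrm{SOC}})$ rather than $(P)$, since neither the derivation of $\Slin$ from $s$ nor the monotonicity relies on the nonconvex equality \eqref{eq:constr:BFM:non:convex:multistage}; this gives $\mathrm{val}(P_{\textrm{SOC}})=\mathrm{val}(P'_{\textrm{SOC}})$.
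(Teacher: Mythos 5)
Your proposal is correct and follows essentially the same route as the paper's proof: componentwise bound $s\leq_{\mathbb{C}}\overline{s}$ from the injection constraints, hence $\Slin\leq_{\mathbb{C}}\Slinbar$ on the tree, then passivity ($r_{k,l},x_{k,l}\geq 0$) makes $\Real(z_{k,l}^{*}\cdot)$ monotone so that the hypothesis \eqref{eq:suff:cond:equal:p:restricted} delivers \eqref{eq:constr:BFM:additional:constr:multistage} and, via the voltage-difference telescoping from the root, \eqref{eq:constr:BFM:voltage:bound:lin:multistage}. The only difference is that you spell out the explicit subtree-sum formula for $\Slin$ and the uniqueness of the linear system, which the paper leaves implicit.
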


\begin{proof}
	Consider a feasible point $y:=(s,v,S,\mathcal{I})$ of $(P)$ (resp. $(P_{\textrm{SOC}})$). Then, we have $s_{i,t,\omega} \leq_{\mathbb{C}} \overline{s}_{i,t,\omega}$ for all $i \in \busSet \setminus \{0\}$, $t\in \timeSet$, $\omega \in \Omega$. Define $(\slin_{0},\vlin,\Slin)$ by \eqref{eq:constr:BFM:voltage:slack:lin:multistage}-\eqref{eq:constr:BFM:power:balance:lin:multistage}-\eqref{eq:constr:BFM:power:balance:slack:lin:multistage}-\eqref{eq:constr:BFM:voltage:prop:lin:multistage}. In particular, we have:
	\begin{align*}
	\Slin_{\overrightarrow{(i,j)},t,\omega} \leq_{\mathbb{C}} \Slinbar_{\overrightarrow{(i,j)},t,\omega} &\overrightarrow{(i,j)} \in \mathcal{E},   t \in \timeSet,  \omega \in \Omega.
	\end{align*}
	Using the above, the assumption of a passive network and \eqref{eq:suff:cond:equal:p:restricted}, we have:
	\begin{align*}
	\Real(z_{k,l}^* \Slin_{\overrightarrow{(i,j)},t,\omega}) \leq \Real(z_{k,l}^* \Slinbar_{\overrightarrow{(i,j)},t,\omega}) \leq 0, \qquad \overrightarrow{(i,j)} \in \mathcal{E}, \overrightarrow{(k,l)} \in \mathcal{E}_i, t \in \timeSet, \omega \in \Omega,
	\end{align*}
	which shows that \eqref{eq:constr:BFM:additional:constr:multistage} holds.	We also get for all $\overrightarrow{(i,j)} \in \mathcal{E}$, $t \in \timeSet$, $\omega \in \Omega$:
	\begin{align*}
	\vlin_{i,t,\omega} - \vlin_{j,t,\omega} = 2 \Real(z_{i,j}^* \Slin_{\overrightarrow{(i,j)},t,\omega})\leq 2 \Real(z_{i,j}^* \Slinbar_{\overrightarrow{(i,j)},t,\omega})= \vlinbar_{i,t,\omega} - \vlinbar_{j,t,\omega} \enspace,
	\end{align*}
	which implies $\vlin_{i,t,\omega} \leq \vlinbar_{i,t,\omega} \leq \overline{v}_i$ for all $i \in \busSet$, using \eqref{eq:constr:BFM:voltage:slack:multistage}, \eqref{eq:constr:BFM:voltage:slack:lin:multistage} and the orientations of the edges towards the slack bus $0$. This shows that $y':=(y,\slin_0,\vlin,\Slin)$ is feasible for $(P')$ (resp $(P'_{\textrm{SOC}})$).
\end{proof}


\begin{remark}
	The above Proposition implies condition C1 in \cite{huan:16}, which is an abstract assumption on the feasible set of the problem, but it is easier to check.
\end{remark}

The following Proposition shows that if there are no reverse power flows in the network, \eqref{eq:suff:cond:equal:p:restricted} holds.

\begin{proposition}\label{prop:no:reverse:power:flow}
	Define $(\vlinbar,\Slinbar)$ as the unique solution of the system \eqref{eq:bound:vlin:slin:system}. Assume the network is radial, connected and passive, that $\overline{v}_i \geq 1$ for all buses $i \in \busSet$ and the following condition holds:
	\begin{align}\label{eq:suff:cond:equal:p:restricted:2}
	\Slinbar_{\overrightarrow{(i,j)},t,\omega} \leq_{\mathbb{C}} 0, \quad \overrightarrow{(i,j)} \in \mathcal{E},  t \in \timeSet,  \omega \in \Omega.
	\end{align}
	Then \eqref{eq:suff:cond:equal:p:restricted} holds.
\end{proposition}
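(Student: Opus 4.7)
The plan is to verify the two inequalities in \eqref{eq:suff:cond:equal:p:restricted} separately, exploiting the sign assumption \eqref{eq:suff:cond:equal:p:restricted:2}, passivity of the network, and the edge orientation towards the root.

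First, I would establish the second inequality $\Real(z_{k,l}^* \Slinbar_{\overrightarrow{(i,j)},t,\omega}) \leq 0$ by a direct algebraic computation. Writing $z_{k,l} = r_{k,l} + \textbf{i} x_{k,l}$ with $r_{k,l}, x_{k,l} \geq 0$ by passivity, and $\Slinbar_{\overrightarrow{(i,j)},t,\omega} = P + \textbf{i} Q$ with $P, Q \leq 0$ by \eqref{eq:suff:cond:equal:p:restricted:2}, one obtains $\Real(z_{k,l}^* \Slinbar_{\overrightarrow{(i,j)},t,\omega}) = r_{k,l} P + x_{k,l} Q \leq 0$. Note that this in fact holds for every pair of directed edges, independently of whether $\overrightarrow{(k,l)} \in \mathcal{E}_i$; the subtree structure plays no role in this step.

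Next, I would prove the voltage bound $\vlinbar_{i,t,\omega} \leq \overline{v}_i$ by propagating along the path from $i$ up to the root $0$. Applying the previous step with $\overrightarrow{(k,l)} = \overrightarrow{(i,j)}$, the voltage-propagation equation in \eqref{eq:bound:vlin:slin:system} yields $\vlinbar_{i,t,\omega} - \vlinbar_{j,t,\omega} = 2\Real(z_{i,j}^* \Slinbar_{\overrightarrow{(i,j)},t,\omega}) \leq 0$ for every directed edge $\overrightarrow{(i,j)} \in \mathcal{E}$. Since edges point from children toward their parents, iterating this inequality along the unique path from $i$ to $0$ in the tree gives $\vlinbar_{i,t,\omega} \leq \vlinbar_{0,t,\omega} = 1 \leq \overline{v}_i$, using the hypothesis $\overline{v}_i \geq 1$.

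The argument presents no real obstacle: it is essentially a sign propagation combined with the radial tree structure. The only point that deserves care is the edge orientation convention and the base case at the root, which is handled by the boundary condition $\vlinbar_{0,t,\omega} = 1$ in \eqref{eq:bound:vlin:slin:system} together with the assumption $\overline{v}_0 \geq 1$.
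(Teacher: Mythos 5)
Your proof is correct and follows essentially the same route as the paper's: the sign computation $\Real(z_{k,l}^*\Slinbar_{\overrightarrow{(i,j)},t,\omega}) = r_{k,l}P + x_{k,l}Q \leq 0$ from passivity and \eqref{eq:suff:cond:equal:p:restricted:2}, followed by propagation of the nonpositive voltage differences along the path to the root to get $\vlinbar_{i,t,\omega} \leq \vlinbar_{0,t,\omega} = 1 \leq \overline{v}_i$. The only difference is that you spell out explicitly the step the paper dismisses as ``one can then easily show,'' and you correctly observe that the subtree condition $\overrightarrow{(k,l)} \in \mathcal{E}_i$ is not needed for that step.
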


\begin{proof}
	Under \eqref{eq:suff:cond:equal:p:restricted:2} and the assumption of a passive network, we have:
	$$
	\vlinbar_{i,t,\omega} - \vlinbar_{j,t,\omega} = 2 \Real(z_{i,j}^* \Slinbar_{\overrightarrow{(i,j)},t,\omega}) \leq 0, \qquad \overrightarrow{(i,j)} \in \mathcal{E}, t \in \timeSet, \omega \in \Omega,
	$$
	which implies for all $i \in \busSet$, $t \in \timeSet$, $\omega \in \Omega$, $\vlinbar_{i,t,\omega} \leq \vlinbar_{0,t,\omega} = 1 \leq \overline{v}_i$, using \eqref{eq:constr:BFM:voltage:slack:multistage}, \eqref{eq:constr:BFM:voltage:slack:lin:multistage} and the orientations of the edges towards the slack bus $0$.
	One can then easily show that \eqref{eq:suff:cond:equal:p:restricted} holds.
\end{proof}


\begin{remark}
	Let us notice that condition \eqref{eq:suff:cond:equal:p:restricted:2} is verified if:
	\begin{align}\label{eq:suff:cond:equal:p:restricted:3}
	s_{i,t,\omega} \leq_{\mathbb{C}} 0, \quad i \in \busSet \setminus \{0\},  t \in \timeSet,  \omega \in \Omega.
	\end{align}
\end{remark}

%


\section{Vanishing relaxation gap for the problem with restricted feasible set}\label{sec:opf:np:relax:gap}

We now prove that the problem with restricted feasible set has no relaxation gap, i.e., $val(P'_{\textrm{SOC}}) = val(P')$. 
The proof of this result relies on an appropriate relabeling of the buses, then on an iterative scheme inspired by \cite{huan:16}. 
By comparison with the latter reference, we consider a multi-stage stochastic setting (in particular, we show that non-anticipativity is preserved throughout the iterations) and we allow more general cost functions. We make the following assumption, which can be ensured by appropriately re-indexing the buses:
\begin{enumerate}
	\item[(\bf H.Lab)] The buses are labeled in non-decreasing order according to their depths in the tree, see Figure \ref{fig:subtree:2}.
\end{enumerate}

The iterative scheme we consider takes as input a feasible point $y^{(0)}$ of $(P'_{\textrm{SOC}})$, and at every iteration, constructs a new feasible point of $(P'_{\textrm{SOC}})$ using a Forward-Backward Sweep method, see Algorithm \ref{algo:forward:backward:pass}.
We shall see that the repeated applications of the Forward-Backward Sweep method \ref{algo:forward:backward:pass} generates a convergent sequence of feasible points $(y^{(k)})_{k \in \mathbb{N}}$ of $(P'_{\textrm{SOC}})$, each of them being non-anticipative, and that the limit satisfies the constraints of the non-convex problem $(P')$. 

\begin{algorithm}[h!]
	\caption{Forward-Backward sweep method}
	\label{algo:forward:backward:pass}
	\begin{algorithmic}[1]
		{\small
			\STATE \textbf{Inputs:} $(s_0, S, \mathcal{I}, v)$.
			\FOR{$\omega \in \Omega$, $t \in \mathcal{T}$}
			\FOR{$i = n, n-1, ...,1$}
			\STATE Let $j$ be the unique node in $\mathcal{B}$ such that $\overrightarrow{(i,j)} \in \mathcal{E}$ with the new labels.
			\STATE $S'_{\overrightarrow{(i,j)},t,\omega} \leftarrow s_{i,t,\omega} + \sum_{\overrightarrow{(k,i)} \in \mathcal{E}} (S'_{\overrightarrow{(k,i)},t,\omega} - z_{k,i} \mathcal{I}'_{\overrightarrow{(k,i)},t,\omega})$.
			\STATE $\mathcal{I}'_{\overrightarrow{(i,j)},t,\omega} \leftarrow \frac{\vert S'_{\overrightarrow{(i,j)},t,\omega} \vert^2}{v_{i,t,\omega}}$.
			\ENDFOR
			\STATE $s'_{0,t,\omega} \leftarrow - \sum_{\overrightarrow{(k,0)} \in \mathcal{E}}( S'_{\overrightarrow{(k,0)},t,\omega} - z_{k,0} \mathcal{I}'_{b,0,t,\omega})$.
			\STATE $v'_{0,t,\omega} \leftarrow 1$.
			\FOR{$i = 1, 2, ...,n$}
			\STATE Let $j$ be the unique node in $\mathcal{B}$ such that $\overrightarrow{(i,j)} \in \mathcal{E}$ with the new labels.
			\STATE $v'_{i,t,\omega} \leftarrow v'_{j,t,\omega} + 2 \Real(z_{i,j}^* S'_{\overrightarrow{(i,j)},t,\omega}) - \vert z_{i,j} \vert^2 \mathcal{I}'_{\overrightarrow{(i,j)},t,\omega}$.
			\ENDFOR
			\ENDFOR
			\STATE \textbf{Outputs:} $(s'_0, S', \mathcal{I}', v')$.
		}
	\end{algorithmic}
\end{algorithm}


\begin{lemma}\label{lemma:comparison:ac:linear}
  Let $y:=(s_0, s, p^{\textrm{inj}}, p^{\textrm{abs}}, X, S, \mathcal{I}, v, \vlin, \Slin, \slin_0)$ be a feasible solution of $(P'_{\textrm{SOC}})$, with . Then, if the network is passive (meaning that for all lines $\overrightarrow{(i,j)} \in \mathcal{E}$, we have $z_{i,j} \geq_{\mathbb{C}}0$), the following inequalities
  are valid:
	\begin{align*}
	&S_{\overrightarrow{(i,j)},t,\omega} \leq_{\mathbb{C}} \Slin_{\overrightarrow{(i,j)},t,\omega}, \quad \forall \overrightarrow{(i,j)} \in \mathcal{E}, t \in \timeSet, \omega \in \Omega,\\
	&s_{0,t,\omega} \geq _{\mathbb{C}} \slin_{0,t,\omega}, \quad \forall t \in \timeSet, \omega \in \Omega,\\
	&v_{i,t,\omega} \leq \vlin_{i,t,\omega}, \quad \forall i \in \busSet, t \in \timeSet, \omega \in \Omega.
	\end{align*}
\end{lemma}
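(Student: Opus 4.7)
The plan is to prove the three inequalities sequentially, using the relabeling guaranteed by (H.Lab) to carry out inductions on tree depth. The recurring algebraic fact I will exploit is that for any passive impedance $z = r + \textbf{i} x$ with $r, x \geq 0$ and any complex number $w = c + \textbf{i} d$ with $c, d \geq 0$, a direct expansion gives $\Re(z^* w) = rc + xd \geq 0$. Combined with $\mathcal{I}_{\overrightarrow{(i,j)},t,\omega} \geq 0$ (constraint \eqref{eq:constr:BFM:intensity:bound}), this will render every error term produced by comparing the AC and linearized equations componentwise nonnegative in the sense $\geq_{\mathbb{C}} 0$.

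For the first inequality, I subtract the AC balance \eqref{eq:constr:BFM:power:balance:multistage} from the linearized balance \eqref{eq:constr:BFM:power:balance:lin:multistage} to get, for each edge $\overrightarrow{(i,j)} \in \mathcal{E}$, the recurrence
\begin{align*}
\Slin_{\overrightarrow{(i,j)},t,\omega} - S_{\overrightarrow{(i,j)},t,\omega} = \sum_{\overrightarrow{(k,i)} \in \mathcal{E}} \left( \Slin_{\overrightarrow{(k,i)},t,\omega} - S_{\overrightarrow{(k,i)},t,\omega} \right) + \sum_{\overrightarrow{(k,i)} \in \mathcal{E}} z_{k,i} \mathcal{I}_{\overrightarrow{(k,i)},t,\omega}.
\end{align*}
Thanks to (H.Lab), processing buses in the order $i = n, n-1, \ldots, 1$ amounts to a leaves-to-root sweep. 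At a leaf the first sum is empty, and the second is $\geq_{\mathbb{C}} 0$ by passivity and $\mathcal{I} \geq 0$; the induction step is then immediate. Subtracting the slack-bus balances \eqref{eq:constr:BFM:power:balance:slack:multistage} and \eqref{eq:constr:BFM:power:balance:slack:lin:multistage} and invoking the inequality just established on every edge incident to bus $0$ yields $s_{0,t,\omega} - \slin_{0,t,\omega} \geq_{\mathbb{C}} 0$.

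For the voltage comparison, I subtract \eqref{eq:constr:BFM:voltage:prop:multistage} from \eqref{eq:constr:BFM:voltage:prop:lin:multistage}, giving
\begin{align*}
(\vlin_{i,t,\omega} - v_{i,t,\omega}) - (\vlin_{j,t,\omega} - v_{j,t,\omega}) = 2 \Real\!\left( z_{i,j}^* (\Slin_{\overrightarrow{(i,j)},t,\omega} - S_{\overrightarrow{(i,j)},t,\omega}) \right) + |z_{i,j}|^2 \mathcal{I}_{\overrightarrow{(i,j)},t,\omega}.
\end{align*}
The second term is nonnegative trivially, and the first is nonnegative by the algebraic fact above applied with $z = z_{i,j}$ and $w = \Slin - S \geq_{\mathbb{C}} 0$ from part one. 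Inducting on buses in the opposite order $i = 1, 2, \ldots, n$ (a root-to-leaf sweep, again legitimized by (H.Lab)), starting from $\vlin_{0,t,\omega} - v_{0,t,\omega} = 0$ by \eqref{eq:constr:BFM:voltage:slack:multistage} and \eqref{eq:constr:BFM:voltage:slack:lin:multistage}, propagates $\vlin_{i,t,\omega} \geq v_{i,t,\omega}$ along every path from the root.

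No step is genuinely hard: the argument amounts to careful bookkeeping of signs along the tree, with passivity and $\mathcal{I} \geq 0$ (valid on the SOC feasible set) producing the loss terms in the right direction. The only subtlety worth flagging is that the power-flow comparison proceeds leaves-to-root while the voltage comparison proceeds root-to-leaves; (H.Lab) makes both inductions transparent by fixing a linear ordering of buses compatible with depth.
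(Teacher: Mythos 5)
Your proof is correct and follows the same route as the paper's own argument: compare the AC and linearized balance equations edge by edge using passivity and $\mathcal{I}\geq 0$ to get $S\leq_{\mathbb{C}}\Slin$ by a leaves-to-root induction, deduce the slack-bus inequality, then compare the voltage-propagation equations and propagate $\vlin_i - v_i \geq \vlin_j - v_j$ from the root using $\vlin_0 = v_0 = 1$. You merely spell out the inductions that the paper's proof leaves implicit.
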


\begin{proof}
	The claimed inequalities are established $t$ by $t$ and $\omega$ by $\omega$. We drop the corresponding indices for simplicity of the notations. The inequalities on $S$ and $\Slin$ arise from the constraint \eqref{eq:constr:BFM:intensity:bound} which implies that $\mathcal{I}$ is non-negative component-wise, from passivity of the network and from constraints \eqref{eq:constr:BFM:power:balance:multistage} and \eqref{eq:constr:BFM:power:balance:lin:multistage}. The inequalities on $s_0$ and $\slin_0$ can then be deduced by the inequality between $S$ and $\Slin$ and constraints \eqref{eq:constr:BFM:power:balance:slack:multistage} and \eqref{eq:constr:BFM:power:balance:slack:lin:multistage}. 
	Comparing \eqref{eq:constr:BFM:voltage:prop:multistage} and \eqref{eq:constr:BFM:voltage:prop:lin:multistage}, using the passivity of the network and the inequalities between $S$ and $\Slin$, one gets for all $\overrightarrow{(i,j)}$ in $\mathcal{E}$:
	$$	v_{i} - v_{j} \leq \vlin_{i} - \vlin_{j}.$$
	We can then show the inequalities on $v$ and $\vlin$ using the fact that $\vlin_{0}=1=v_{0}$, by \eqref{eq:constr:BFM:voltage:slack:multistage} and \eqref{eq:constr:BFM:voltage:slack:lin:multistage}, and using the fact that edges are directed towards the slack bus (root of the tree) indexed by $0$.
\end{proof}

\begin{lemma}\label{lemma:monotone:algo:opf}
	Algorithm \ref{algo:forward:backward:pass} is well-posed. Let $y$ be a feasible solution of $(P'_{\textrm{SOC}})$, defined by $y:= (s_0, s, p^{\textrm{inj}}, p^{\textrm{abs}}, X, S, \mathcal{I}, v, \vlin, \Slin, \slin_0)$. Apply Algorithm \ref{algo:forward:backward:pass} once to $(s_0,S,\mathcal{I},v)$ and denote by $(s'_0 ,S' ,\mathcal{I}', v')$ its output.
	Then we have:
	\begin{align*}
	&S_{\overrightarrow{(i,j)},t,\omega} \leq_{\mathbb{C}} S'_{\overrightarrow{(i,j)},t,\omega} , \quad \forall \overrightarrow{(i,j)} \in \mathcal{E}, t \in \timeSet, \omega \in \Omega,\\
	& \left| S_{\overrightarrow{(i,j)},t,\omega} \right| \geq \left| S'_{\overrightarrow{(i,j)},t,\omega} \right| , \quad \forall \overrightarrow{(i,j)} \in \mathcal{E}, t \in \timeSet, \omega \in \Omega,\\
	&\mathcal{I}_{\overrightarrow{(i,j)},t,\omega} \geq \mathcal{I}'_{\overrightarrow{(i,j)},t,\omega} , \quad \forall \overrightarrow{(i,j)} \in \mathcal{E}, t \in \timeSet, \omega \in \Omega,\\
	&s_{0,t,\omega} \geq _{\mathbb{C}} s'_{0,t,\omega}, \quad \forall t \in \timeSet, \omega \in \Omega,\\
	&v_{i,t,\omega} \leq v'_{i,t,\omega}, \quad \forall i \in \busSet, t \in \timeSet, \omega \in \Omega.
	\end{align*}
	Moreover, $y':=(s_0, s, p^{\textrm{inj}}, p^{\textrm{abs}}, X, S, \mathcal{I}, v, \vlin, \Slin, \slin_0)$ is feasible for $(P'_{\textrm{SOC}})$. In particular, it is non-anticipative.
\end{lemma}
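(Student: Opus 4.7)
The proof I propose would carry out two passes along the depth ordering guaranteed by \textbf{(H.Lab)}: a backward sweep to establish the inequalities on $(S,\mathcal{I},|S|,s_0)$, then a forward sweep for the inequality on $v$, and finally a feasibility check plus a non-anticipativity verification. Well-posedness is a direct consequence of the ordering: for each $(t,\omega)$, the recursion defining $S'_{\overrightarrow{(i,j)}}$ only uses already-computed children values, and the division by $v_{i,t,\omega}$ is legitimate because $v_{i,t,\omega} \geq \underline{v}_i > 0$ by \eqref{eq:constr:BFM:voltage:bound:multistage}. The first inequality $S \leq_{\mathbb{C}} S'$ follows by reverse-depth induction: the leaf base case gives equality, and setting $\delta_{\overrightarrow{(i,j)}} := S'_{\overrightarrow{(i,j)}} - S_{\overrightarrow{(i,j)}}$ and unrolling the recursion together with \eqref{eq:constr:BFM:power:balance:multistage} along the subtree of $i$ yields the compact formula $\delta_{\overrightarrow{(i,j)}} = \sum_{\overrightarrow{(k,l)} \in \mathcal{E}_i} z_{k,l}(\mathcal{I}_{\overrightarrow{(k,l)}} - \mathcal{I}'_{\overrightarrow{(k,l)}})$, which is $\geq_{\mathbb{C}} 0$ by passivity and the inductive bound $\mathcal{I} \geq \mathcal{I}'$ on descendants.

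The hardest step, and the one where the restriction \eqref{eq:constr:BFM:additional:constr:multistage} is essential, is $|S'| \leq |S|$. I would rely on the identity $|S|^2 - |S'|^2 = -2\Real(\bar{S}' \delta_{\overrightarrow{(i,j)}}) + |\delta_{\overrightarrow{(i,j)}}|^2$ and show $\Real(\bar{S}' \delta_{\overrightarrow{(i,j)}}) \leq 0$ via a cone--polarity argument. First I would adapt the proof of Lemma \ref{lemma:comparison:ac:linear} to obtain $S'_{\overrightarrow{(i,j)}} \leq_{\mathbb{C}} \Slin_{\overrightarrow{(i,j)}}$, since the losses $z_{k,i}\mathcal{I}'_{(k,i)}$ only widen the gap with the lossless linearized flow. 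Then by passivity the constraint \eqref{eq:constr:BFM:additional:constr:multistage} on $\Slin$ transfers to $S'$: $\Real(z_{k,l}^* S'_{\overrightarrow{(i,j)}}) \leq \Real(z_{k,l}^* \Slin_{\overrightarrow{(i,j)}}) \leq 0$ for every $\overrightarrow{(k,l)} \in \mathcal{E}_i$. The compact formula for $\delta_{\overrightarrow{(i,j)}}$ exhibits it as a non-negative conic combination of exactly these $z_{k,l}$, so a weighted sum yields $\Real(\bar{S}' \delta_{\overrightarrow{(i,j)}}) \leq 0$, hence $|S|^2 \geq |S'|^2$. The bound $\mathcal{I}' \leq \mathcal{I}$ follows at once from $\mathcal{I}'_{\overrightarrow{(i,j)}} = |S'_{\overrightarrow{(i,j)}}|^2/v_i \leq |S_{\overrightarrow{(i,j)}}|^2/v_i \leq \mathcal{I}_{\overrightarrow{(i,j)}}$ (the last by SOC-feasibility of $y$). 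Finally, $s_0 \geq_{\mathbb{C}} s'_0$ is immediate from subtracting \eqref{eq:constr:BFM:power:balance:slack:multistage} from its primed counterpart, using $S \leq_{\mathbb{C}} S'$, $\mathcal{I} \geq \mathcal{I}'$ and passivity.

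For $v \leq v'$ I would induct from the root ($v_0 = v'_0 = 1$): subtracting \eqref{eq:constr:BFM:voltage:prop:multistage} from its primed analogue gives three contributions — the inductive gap $v'_j - v_j$ at the parent, $2\Real(z_{i,j}^* \delta_{\overrightarrow{(i,j)}})$, and $|z_{i,j}|^2(\mathcal{I}_{\overrightarrow{(i,j)}} - \mathcal{I}'_{\overrightarrow{(i,j)}})$ — each non-negative by the previous step and passivity. Feasibility of $y'$ in $(P'_{\textrm{SOC}})$ is then a routine chain of bounds: $\underline{v}_i \leq v_i \leq v'_i \leq \vlin_i \leq \overline{v}_i$ (the upper bound obtained by an entirely analogous forward induction comparing $v'$ with $\vlin$, combined with feasibility of $y$), $0 \leq \mathcal{I}' \leq \mathcal{I} \leq \overline{\mathcal{I}}$, $|S'| \leq |S| \leq \overline{S}$; the rotated SOC constraint is preserved since $v'_i \mathcal{I}'_{\overrightarrow{(i,j)}} = (v'_i/v_i)|S'_{\overrightarrow{(i,j)}}|^2 \geq |S'_{\overrightarrow{(i,j)}}|^2$; balance and voltage-propagation hold by construction; storage variables, the linearized quantities $(\Slin,\vlin,\slin_0)$ and the restriction \eqref{eq:constr:BFM:additional:constr:multistage} are untouched. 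Non-anticipativity survives because Algorithm \ref{algo:forward:backward:pass} processes each $(t,\omega)$ separately, reading only the inputs $(s_{t,\omega}, v_{t,\omega})$ that are non-anticipative by feasibility of $y$. The main obstacle is the second inequality: it forces one to interpret \eqref{eq:constr:BFM:additional:constr:multistage} as a polar-cone condition coupling the direction of $S'$ with the cone in which the correction $\delta$ is constrained to lie.
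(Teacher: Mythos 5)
Your proof is correct and follows essentially the same route as the paper's: backward induction along the depth labeling with the conic-combination formula $\delta_{\overrightarrow{(i,j)}} = \sum_{\overrightarrow{(k,l)}\in\mathcal{E}_i} z_{k,l}(\mathcal{I}_{\overrightarrow{(k,l)}}-\mathcal{I}'_{\overrightarrow{(k,l)}})$, the restriction \eqref{eq:constr:BFM:additional:constr:multistage} to control $\vert S'\vert$, then forward induction for $v$ and the same feasibility and non-anticipativity checks. The only (minor) difference is in the key step: the paper bounds $\vert S'\vert^2-\vert S\vert^2$ above by $2\Real(\overline{\Slin}\delta)\leq 0$ using $S+S'\leq_{\mathbb{C}}2\Slin$ and the sign condition on $\Slin$ directly, whereas you use the exact identity $\vert S\vert^2-\vert S'\vert^2=-2\Real(\overline{S'}\delta)+\vert\delta\vert^2$ and transfer the sign condition to $S'$ via $S'\leq_{\mathbb{C}}\Slin$ --- both hinge on the same comparison with the linearized flow, which you make explicit where the paper only cites Lemma \ref{lemma:comparison:ac:linear}.
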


\begin{proof}
	The claimed inequalities are established $t$ by $t$ and $\omega$ by $\omega$. We drop the corresponding indices for simplicity of the notations.
	The definition of $S$ for leaves of the tree in the forward pass is well-defined as the sum in the LHS is empty in this case by our labels (Algorithm \ref{algo:forward:backward:pass}, line 6). 
	The labels chosen ensure that the forward pass always explores leaves before their ancestors, which ensures that the forward pass is well-defined. Therefore, the whole algorithm is well-posed.
	Consider the forward pass, with $i=n$, $n$ being the index of the last bus after setting the new labels (see Assumption \textbf{(H.Lab)}). Denoting $j$ its unique ancestor, we have $S_{\overrightarrow{(n,j)}} = s_{n} = S'_{\overrightarrow{(n,j)}}$ by construction. 
	We then obtain, using the fact that $y$ satisfies \eqref{eq:constr:BFM:soc:multistage}:
	$$\mathcal{I}'_{\overrightarrow{(n,j)}} = |S'_{\overrightarrow{(n,j)}}|^2/v_{n} = |S_{\overrightarrow{(n,j)}}|^2/v_{n} \leq  \mathcal{I}_{\overrightarrow{(n,j)}}.$$
	
	Let us now assume $i<n$, and we assume the inequalities for $S$, $S'$, $\mathcal{I}$ and $\mathcal{I}'$ have been proved for all $k = i+1,...,n$. 
	If $i$ is a leaf, we can prove the inequalities similarly as for $i=n$. 
	Consider the case where $i<n$ is not a leaf. Let $j$ be its unique ancestor. Then, by passivity of the network:
	$$
	S'_{\overrightarrow{(i,j)}} = s_{i} + \sum_{\overrightarrow{(k,i)} \in \mathcal{E}} (S'_{\overrightarrow{(k,i)}} - z_{k,i} \mathcal{I}'_{\overrightarrow{(k,i)}})\geq_{\mathbb{C}} s_{i} + \sum_{\overrightarrow{(k,i)} \in \mathcal{E}} (S_{\overrightarrow{(k,i)}} - z_{k,i} \mathcal{I}_{\overrightarrow{(k,i)}}) = S_{\overrightarrow{(i,j)}}.
	$$
	Besides, denoting $P:= \Real(S)$, $Q :=\Imag(S)$, $\Plin:= \Real(\Slin)$ and $\Qlin:= \Imag(\Slin)$:
	\begin{align*}
	\vert S'_{\overrightarrow{(i,j)}} \vert^2 -  \vert S_{\overrightarrow{(i,j)}} \vert^2 & = (P'_{\overrightarrow{(i,j)}}+ P_{\overrightarrow{(i,j)}})(P'_{\overrightarrow{(i,j)}} - P_{\overrightarrow{(i,j)}})+ (Q'_{\overrightarrow{(i,j)}} + Q_{\overrightarrow{(i,j)}})(Q'_{\overrightarrow{(i,j)}} - Q_{\overrightarrow{(i,j)}})\\
	&\leq 2 \Plin_{\overrightarrow{(i,j)}}(P'_{\overrightarrow{(i,j)}} - P_{\overrightarrow{(i,j)}})  + 2 \Qlin_{\overrightarrow{(i,j)}} (Q'_{\overrightarrow{(i,j)}} - Q_{\overrightarrow{(i,j)}}) \\
	&= -2\left(\sum_{(k,l) \in \mathcal{E}_i} (\Plin_{\overrightarrow{(i,j)}}r_{k,l} + \Qlin_{\overrightarrow{(i,j)}}x_{k,l} )(\mathcal{I}'_{\overrightarrow{(k,l)}} - \mathcal{I}_{\overrightarrow{(k,l)}})\right)\\
	&= -2 \left(\sum_{\overrightarrow{(k,l)} \in \mathcal{E}_i} \Real(z_{k,l}^* \Slin_{\overrightarrow{(i,j)}})(\mathcal{I}'_{\overrightarrow{(k,l)}} - \mathcal{I}_{\overrightarrow{(k,l)}})\right)\\
	& \leq 0.
	\end{align*}
	In the inequality in the third line, we used Lemma \ref{lemma:comparison:ac:linear}, then we use the definition of $S'$ and the fact that $S$ satisfies \eqref{eq:constr:BFM:power:balance:multistage} to obtain the following equality. The last inequality is obtained using $\mathcal{I}'_{\overrightarrow{(k,l)}} \geq \mathcal{I}_{\overrightarrow{(k,l)}}$ for all $\overrightarrow{(k,l)} \in \mathcal{E}_i$ and the fact that $\Slin$ satisfies \eqref{eq:constr:BFM:additional:constr:multistage}. 
	We then obtain:
	$$\mathcal{I}'_{\overrightarrow{(i,j)}} = |S'_{\overrightarrow{(i,j)}}|^2/v_{i} \leq |S_{\overrightarrow{(i,j)}}|^2/v_{i} \leq  \mathcal{I}_{\overrightarrow{(i,j)}}.$$
	
	The inequality $s_{0} \geq _{\mathbb{C}} s'_{0}$ can then be deduced from the first and third above inequality and the assumption of passivity of the network.
	We have $v_0 = 1 =v'_0$ by construction and by \eqref{eq:constr:BFM:voltage:slack:multistage}.
	Notice then that for all $\overrightarrow{(i,j)} \in \mathcal{E}$, by construction of $v'$ and by \eqref{eq:constr:BFM:voltage:prop:multistage}:
	$$
	v'_i - v_i \geq v'_j - v_j,
	$$
	where we used the earlier inequalities on $S$, $S'$, $\mathcal{I}$ and $\mathcal{I}'$ and the assumption of passivity of the network. By propagating this in the network from $0$ to $n$, we get the desired inequality on the voltage squared magnitudes. 
	We have:
	\begin{align*}
	\forall \overrightarrow{(i,j)} \in \mathcal{E}, \quad \mathcal{I}'_{\overrightarrow{(i,j)}} = \vert S'_{\overrightarrow{(i,j)}}\vert^2/v_i \geq \vert S'_{\overrightarrow{(i,j)}}\vert^2/v'_i.
	\end{align*}
	Hence $(S',v',\mathcal{I}')$ satisfies \eqref{eq:constr:BFM:soc:multistage}. Let $y':=(s_0, s, p^{\textrm{inj}}, p^{\textrm{abs}}, X, S, \mathcal{I}, v, \vlin, \Slin, \slin_0)$ be the new point. Non-anticipativity of $y'$ arises from the fact that for all $t\in \timeSet$ and $\omega \in \Omega$, $y'_{t,\omega}$ is measurable with respect to $y_{t,\omega}$. By construction and using the inequalities derived above as well as Lemma \ref{lemma:comparison:ac:linear}, one can show that $y'$ is feasible for $(P'_{\textrm{SOC}})$ if $y$ is feasible. 
\end{proof}

\begin{corollary}\label{corollary:opf:rounding}
	Let $y:=(s_0, s, p^{\textrm{inj}}, p^{\textrm{abs}}, X, S, \mathcal{I}, v, \vlin, \Slin, \slin_0)$ be a feasible solution of $(P'_{\textrm{SOC}})$. Then, there exists a feasible (non-anticipative) point for $(P')$, denoted by $y':= (s'_0, s, p^{\textrm{inj}}, p^{\textrm{abs}}, X, S', \mathcal{I}', v', \vlin, \Slin, \slin_0)$ such that:
	\begin{align*}
	&S_{\overrightarrow{(i,j)},t,\omega} \leq_{\mathbb{C}} S'_{\overrightarrow{(i,j)},t,\omega} , \quad \forall \overrightarrow{(i,j)} \in \mathcal{E}, t \in \timeSet, \omega \in \Omega,\\
	& \left| S_{\overrightarrow{(i,j)},t,\omega} \right| \geq \left| S'_{\overrightarrow{(i,j)},t,\omega} \right| , \quad \forall \overrightarrow{(i,j)} \in \mathcal{E}, t \in \timeSet, \omega \in \Omega,\\
	&\mathcal{I}_{\overrightarrow{(i,j)},t,\omega} \geq \mathcal{I}'_{\overrightarrow{(i,j)},t,\omega} , \quad \forall \overrightarrow{(i,j)} \in \mathcal{E}, t \in \timeSet, \omega \in \Omega,\\
	&s_{0,t,\omega} \geq _{\mathbb{C}} s'_{0,t,\omega}, \quad \forall t \in \timeSet, \omega \in \Omega,\\
	&v_{i,t,\omega} \leq v'_{i,t,\omega}, \quad \forall i \in \busSet, t \in \timeSet, \omega \in \Omega.
	\end{align*}
\end{corollary}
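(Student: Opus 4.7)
The plan is to iterate the Forward-Backward Sweep of Algorithm \ref{algo:forward:backward:pass}. Starting from $y^{(0)} := y$, I would let $y^{(k+1)}$ be the result of applying Algorithm \ref{algo:forward:backward:pass} to $y^{(k)}$, leaving the components $s$, $p^{\textrm{inj}}$, $p^{\textrm{abs}}$, $X$, $\vlin$, $\Slin$, $\slin_0$ untouched at every step (they remain equal to those of $y$). By Lemma \ref{lemma:monotone:algo:opf} applied inductively, every $y^{(k)}$ is feasible (hence non-anticipative) for $(P'_{\textrm{SOC}})$, and the one-step monotonicity relations listed there hold between $y^{(k)}$ and $y^{(k+1)}$: $S^{(k)}_{\overrightarrow{(i,j)}} \leq_{\mathbb{C}} S^{(k+1)}_{\overrightarrow{(i,j)}}$, $\vert S^{(k)}_{\overrightarrow{(i,j)}}\vert \geq \vert S^{(k+1)}_{\overrightarrow{(i,j)}}\vert$, $\mathcal{I}^{(k)}_{\overrightarrow{(i,j)}} \geq \mathcal{I}^{(k+1)}_{\overrightarrow{(i,j)}}$, $s^{(k)}_{0} \geq_{\mathbb{C}} s^{(k+1)}_{0}$, and $v^{(k)}_{i} \leq v^{(k+1)}_{i}$, pointwise in $(\overrightarrow{(i,j)},i,t,\omega)$.

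Next I would invoke monotone bounded convergence, pointwise in $(t,\omega)$: $v^{(k)}_i$ is non-decreasing and bounded above by $\overline{v}_i$ from \eqref{eq:constr:BFM:voltage:bound:multistage}; $\mathcal{I}^{(k)}$ and $\vert S^{(k)} \vert$ are non-increasing and bounded below by $0$ (from \eqref{eq:constr:BFM:intensity:bound} and trivially); and the real and imaginary parts of $S^{(k)}$ and $s_0^{(k)}$ remain bounded in modulus by $\overline{S}$ using \eqref{eq:constr:BFM:power:magnitude:bound} applied to the feasible iterates. Hence the sequences converge coordinate-wise to limits $(S^*, \mathcal{I}^*, v^*, s_0^*)$, and I would take $y' := (s_0^*, s, p^{\textrm{inj}}, p^{\textrm{abs}}, X, S^*, \mathcal{I}^*, v^*, \vlin, \Slin, \slin_0)$ as the candidate feasible point of $(P')$.

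The main technical obstacle is passing to the limit in the nonlinear update $\mathcal{I}^{(k+1)}_{\overrightarrow{(i,j)}} = \vert S^{(k+1)}_{\overrightarrow{(i,j)}} \vert^2 / v^{(k)}_{i}$ of line 6 of Algorithm \ref{algo:forward:backward:pass}, which requires the denominators to stay uniformly bounded away from zero. This is ensured because $v^{(k)}_i$ is non-decreasing in $k$ with $v^{(0)}_i \geq \underline{v}_i > 0$ for $i \neq 0$ (and $v^{(k)}_0 = 1$ throughout), so continuity yields $\mathcal{I}^*_{\overrightarrow{(i,j)}} = \vert S^*_{\overrightarrow{(i,j)}} \vert^2 / v^*_{i}$, which is exactly the non-convex equality \eqref{eq:constr:BFM:non:convex:multistage}. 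The remaining constraints of $(P')$ are affine (the power balances \eqref{eq:constr:BFM:power:balance:multistage}--\eqref{eq:constr:BFM:power:balance:slack:multistage} and the voltage propagation \eqref{eq:constr:BFM:voltage:prop:multistage} pass to the limit, as do the bound constraints and the linearized system \eqref{eq:constr:BFM:voltage:slack:lin:multistage}--\eqref{eq:constr:BFM:additional:constr:multistage}), and non-anticipativity is preserved in the limit because it is a finite system of closed linear equalities on each iterate. Finally, telescoping the one-step inequalities from $k=0$ to $k \to \infty$ yields precisely the monotonicity inequalities between $y$ and $y'$ stated in the corollary.
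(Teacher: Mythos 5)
Your proposal is correct and follows essentially the same route as the paper: iterate Algorithm \ref{algo:forward:backward:pass}, invoke Lemma \ref{lemma:monotone:algo:opf} inductively for feasibility and one-step monotonicity, pass to the limit by monotone convergence, and identify the limit as a fixed point satisfying \eqref{eq:constr:BFM:non:convex:multistage}. The only cosmetic difference is that you obtain uniform boundedness from the box constraints \eqref{eq:constr:BFM:voltage:bound:multistage}--\eqref{eq:constr:BFM:power:magnitude:bound}, whereas the paper bounds the iterates by the linearized quantities $(\Slin,\vlin,\slin_0)$ via Lemma \ref{lemma:comparison:ac:linear}; both suffice, and your explicit treatment of the denominators $v^{(k)}_i$ staying away from zero is a welcome detail the paper leaves implicit.
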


\begin{proof}
	The claimed inequalities are established $t$ by $t$ and $\omega$ by $\omega$. We drop the corresponding indices for simplicity of the notations.
	Apply inductively Algorithm \ref{algo:forward:backward:pass} to $x^{(0)}:=(s_0, S, \mathcal{I},v)$. This defines a sequence $(x^{(k)})_{k \in \mathbb{N}}:=(s^{(k)}_0, S^{(k)}, \mathcal{I}^{(k)},v^{(k)})_{k \in \mathbb{N}}$. By Lemmas \ref{lemma:comparison:ac:linear} and \ref{lemma:monotone:algo:opf}, we have the following inequalities for all $k \in \mathbb{N}$:
	\begin{align*}
	&S^{(k)}_{\overrightarrow{(i,j)}} \leq_{\mathbb{C}} S^{(k+1)}_{\overrightarrow{(i,j)}} \leq_{\mathbb{C}} \Slin_{\overrightarrow{(i,j)}} , \quad \forall \overrightarrow{(i,j)} \in \mathcal{E},\\
	&\mathcal{I}^{(k)}_{\overrightarrow{(i,j)}} \geq \mathcal{I}^{(k+1)}_{\overrightarrow{(i,j)}} \geq 0 , \quad \forall \overrightarrow{(i,j)} \in \mathcal{E},\\
	&s^{(k)}_{0} \geq _{\mathbb{C}} s^{(k+1)}_{0}\geq _{\mathbb{C}} \slin_{0} ,\\
	&v^{(k)}_{i} \leq v^{(k+1)}_{i} \leq \vlin_i, \quad \forall i \in \busSet.
	\end{align*}
	By the monotone convergence theorem, $(x^{(k)})$ converges to a point $(s'_0,S',\mathcal{I}',v')$. Define	$y':= (s'_0, s, p^{\textrm{inj}}, p^{\textrm{abs}}, X, S', \mathcal{I}', v', \vlin, \Slin, \slin_0),$
	which is feasible for $(P'_{\textrm{SOC}})$ as a limit of feasible points of $(P'_{\textrm{SOC}})$. Besides, it satisfies \eqref{eq:constr:BFM:non:convex:multistage} as $(s'_0,S',\mathcal{I}',v')$ is a fixed point of Algorithm \ref{algo:forward:backward:pass}. This implies that $y'$ is a feasible point of $(P')$. The inequalities claimed arise from the monotone behavior of the sequence $x^{(k)}$.
\end{proof}

\begin{theorem}\label{theorem:opf:zero:duality:gap:restricted}
	Assume the following:
	\begin{enumerate}
		\item The network is radial and connected.
		\item The network is passive, i.e., $\Real(z_{i,j}) \geq 0$ and $\Imag(z_{i,j}) \geq 0$ for all lines $\overrightarrow{(i,j)} \in \mathcal{E}$ of the network.
		\item The cost function $C$ is convex, component-wise monotone non-increasing in $\Real(S)$, $\Imag(S)$ and $v$, component-wise monotone non-decreasing in $\mathcal{I}$, $\Real(s_0)$, $\Imag(s_0)$ and $\vert S \vert$.
	\end{enumerate}
	Then $(P')$ has no relaxation gap, i.e., its optimal value coincides with the optimal value of $(P'_{\textrm{SOC}})$.
\end{theorem}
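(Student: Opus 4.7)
The inequality $val(P'_{\textrm{SOC}}) \leq val(P')$ is immediate, since $(P'_{\textrm{SOC}})$ is obtained from $(P')$ by replacing the equality \eqref{eq:constr:BFM:non:convex:multistage} by the convex inequality \eqref{eq:constr:BFM:soc:multistage}, so every feasible point of $(P')$ is feasible for $(P'_{\textrm{SOC}})$ with the same cost. The content of the theorem is therefore the reverse inequality $val(P') \leq val(P'_{\textrm{SOC}})$, which I propose to prove by a rounding argument.

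The plan is to take an arbitrary feasible point $y$ of $(P'_{\textrm{SOC}})$ and to feed it to Corollary \ref{corollary:opf:rounding}. The corollary returns a non-anticipative point $y'=(s'_0, s, p^{\textrm{inj}}, p^{\textrm{abs}}, X, S', \mathcal{I}', v', \vlin, \Slin, \slin_0)$ which is feasible for $(P')$ and satisfies, pathwise in $(t,\omega)$, the inequalities
\begin{align*}
\Real(S) \leq \Real(S'),\quad \Imag(S) \leq \Imag(S'),\quad |S'|\leq |S|,\quad \mathcal{I}'\leq \mathcal{I},\\
\Real(s'_0)\leq \Real(s_0),\quad \Imag(s'_0)\leq \Imag(s_0),\quad v\leq v',
\end{align*}
while the remaining components $(p^{\textrm{inj}}, p^{\textrm{abs}}, q, X)$ of $y'$ coincide with those of $y$ and are therefore unchanged. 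Since these inequalities hold componentwise in each scenario and at each time step, the componentwise monotonicity of $C$ assumed in the theorem (non-increasing in $\Real(S)$, $\Imag(S)$, $v$; non-decreasing in $\mathcal{I}$, $\Real(s_0)$, $\Imag(s_0)$, $|S|$) yields $C(s'_0, p^{\textrm{inj}}, p^{\textrm{abs}}, q, S', \mathcal{I}', X) \leq C(s_0, p^{\textrm{inj}}, p^{\textrm{abs}}, q, S, \mathcal{I}, X)$ almost surely. Taking expectations and using the monotonicity of $\mathbb{E}[\cdot]$ gives $\mathbb{E}[C(y')]\leq \mathbb{E}[C(y)]$, hence $val(P') \leq \mathbb{E}[C(y)]$. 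Minimizing over $y$ feasible for $(P'_{\textrm{SOC}})$ yields $val(P') \leq val(P'_{\textrm{SOC}})$, and combining with the relaxation inequality establishes the theorem.

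In this proof nearly all of the work has already been absorbed into Corollary \ref{corollary:opf:rounding}, which itself rests on Lemma \ref{lemma:monotone:algo:opf} (the monotone behavior of the Forward--Backward Sweep) and on Lemma \ref{lemma:comparison:ac:linear} (comparison with the linearized flow). The one point that deserves explicit attention is that convexity of $C$ plays no direct role in the rounding step as such, but is needed to guarantee that $(P'_{\textrm{SOC}})$ is a tractable convex program; the reduction here is purely monotone. The step I would expect to double-check carefully is the verification that monotonicity applies uniformly in $(t,\omega)$ so that the pathwise inequality can be integrated against the scenario probabilities without any measurability issue; because $y'$ inherits non-anticipativity from $y$ (as recorded in Lemma \ref{lemma:monotone:algo:opf} and preserved under pointwise convergence in Corollary \ref{corollary:opf:rounding}), this is a routine consequence of the monotonicity of expectation, and no further obstacle arises.
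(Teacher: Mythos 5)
Your proof is correct and follows essentially the same route as the paper: both reduce the theorem to Corollary \ref{corollary:opf:rounding} combined with the componentwise monotonicity assumptions on $C$, the only cosmetic difference being that the paper splits into the infeasible/bounded/unbounded cases while you handle all of them at once by applying the rounding to an arbitrary feasible point and taking the infimum. Your remark that convexity of $C$ is not used in the rounding step itself is also consistent with the paper's argument.
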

\begin{proof}
	If $(P'_{\textrm{SOC}})$ is infeasible, then so is $(P')$ and the result holds.
	If $(P'_{\textrm{SOC}})$ is feasible and bounded from below, consider its optimal solution $y^*$. Corollary \ref{corollary:opf:rounding} and the monotonicity assumptions on the cost then show that there exists $\tilde{y}^*$ which is feasible for $(P')$ and with lower cost than $y^*$. This yields the result.
	If $(P'_{\textrm{SOC}})$ is feasible and unbounded from below, given a sequence of feasible points of $(P'_{\textrm{SOC}})$, whose costs goes to $-\infty$, we build a sequence of feasible point of $(P')$ with lower costs, using Corollary \ref{corollary:opf:rounding}. This shows that $(P')$ is also feasible and unbounded from below.
\end{proof}

The assumptions of Theorem \ref{theorem:opf:zero:duality:gap:restricted} are quite realistic. In practice, the cost functional is often independent from $\Real(S), \Imag(S), v, \Imag(s_0)$ and is monotone non-decreasing in the active power injections at the substation $\Real(s_0)$ and in thermal losses, proportional to $\mathcal{I}$. Besides, no assumption is made on the behavior of the cost regarding power injections at buses other than the slack bus. This allows to apply the result to a wide range of applications.

The following theorem is an immediate consequence of Theorem \ref{theorem:opf:zero:duality:gap:restricted}.

\begin{theorem}[A posteriori bound on the relaxation gap]\label{theorem:a:posteriori:bound:relaxation:gap}
	Under the assumptions of Theorem \ref{theorem:opf:zero:duality:gap:restricted}, the relaxation gap of $(P)$, given by $val(P) - val(P_{\textrm{SOC}})$ is bounded from above by $val(P'_{\textrm{SOC}}) - val(P_{\textrm{SOC}})$.\hfill \qed
\end{theorem}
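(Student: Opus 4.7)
The plan is to combine two facts that are already essentially on the table. First, inspecting how $(P')$ is built from $(P)$: problem $(P')$ augments $(P)$ by the auxiliary variables $(\slin_0,\vlin,\Slin)$ together with constraints \eqref{eq:constr:BFM:voltage:slack:lin:multistage}--\eqref{eq:constr:BFM:additional:constr:multistage} that they must satisfy, while the cost $C$ depends only on the original variables $y$. Hence if $(y,\slin_0,\vlin,\Slin)$ is feasible for $(P')$, then its $y$-component is automatically feasible for $(P)$ (all constraints of $(P)$ still appear), and it carries the same objective value. Taking the infimum yields $val(P)\leq val(P')$, which is precisely the inequality already recorded in Section~\ref{sec:opf:restrict}. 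Second, Theorem~\ref{theorem:opf:zero:duality:gap:restricted} provides the vanishing relaxation gap of the restricted problem, i.e.\ $val(P')=val(P'_{\textrm{SOC}})$.

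Chaining these two gives $val(P)\leq val(P'_{\textrm{SOC}})$, and subtracting $val(P_{\textrm{SOC}})$ on both sides produces the claimed upper bound
\[
val(P)-val(P_{\textrm{SOC}})\;\leq\;val(P'_{\textrm{SOC}})-val(P_{\textrm{SOC}}).
\]
I would also briefly comment on the degenerate cases to make the argument airtight: if $(P'_{\textrm{SOC}})$ is infeasible then so is $(P')$, and the bound is vacuously valid; if $val(P_{\textrm{SOC}})=-\infty$ the statement is trivial; otherwise, the usual arithmetic with extended reals applies.

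There is essentially no hard step here: the theorem is an algebraic repackaging of Theorem~\ref{theorem:opf:zero:duality:gap:restricted}. The only point worth underlining — and the reason this deserves to be stated as a theorem at all — is the \emph{a posteriori} nature of the bound: $val(P'_{\textrm{SOC}})$ and $val(P_{\textrm{SOC}})$ are both values of convex second-order cone programs and are therefore computable by standard solvers, which turns the abstract inequality into a numerical certificate of the size of the relaxation gap even when the a priori conditions (e.g.\ those of Proposition~\ref{prop:suff:cond:equal:p:restricted}) fail.
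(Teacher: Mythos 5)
Your argument is exactly the one the paper intends: the theorem is stated as an immediate consequence of the inequality $val(P)\leq val(P')$ (recorded when $(P')$ is introduced) combined with $val(P')=val(P'_{\textrm{SOC}})$ from Theorem \ref{theorem:opf:zero:duality:gap:restricted}. Your proposal is correct and matches the paper's reasoning, with the added care about degenerate cases being a harmless refinement.
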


The following theorem is a consequence of Proposition \ref{prop:suff:cond:equal:p:restricted} and Theorem \ref{theorem:opf:zero:duality:gap:restricted}.

\begin{theorem}[A priori condition for a vanishing relaxation gap]\label{theorem:opf:zero:duality:gap}
	Under the assumptions of Proposition \ref{prop:suff:cond:equal:p:restricted} and Theorem \ref{theorem:opf:zero:duality:gap:restricted}, the problem $(P)$ has no relaxation gap, i.e., $val(P) = val(P_{\textrm{SOC}})$.\hfill \qed
\end{theorem}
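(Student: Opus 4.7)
The plan is to deduce this result by a short chain of equalities, combining the two named results in its statement. I would first note that the hypotheses of Proposition \ref{prop:suff:cond:equal:p:restricted} and the hypotheses of Theorem \ref{theorem:opf:zero:duality:gap:restricted} are simultaneously in force, so I am entitled to invoke both.

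Next I would apply Proposition \ref{prop:suff:cond:equal:p:restricted} in its two guises: the construction it provides (lifting a feasible point $y$ of $(P)$, resp.\ $(P_{\textrm{SOC}})$, to a feasible point $y'=(y,\slin_0,\vlin,\Slin)$ of $(P')$, resp.\ $(P'_{\textrm{SOC}})$, via the linear system \eqref{eq:constr:BFM:voltage:slack:lin:multistage}--\eqref{eq:constr:BFM:voltage:prop:lin:multistage}) shows that the feasible sets of $(P)$ and $(P')$ have the same projection onto the original decision variables, and similarly for the SOC relaxations. Since the cost $C$ depends only on the original variables $(s_0, p^{\textrm{inj}}, p^{\textrm{abs}}, q, S, \mathcal{I}, X)$ and not on the auxiliary variables $(\slin_0,\vlin,\Slin)$, the proposition yields the two equalities $val(P)=val(P')$ and $val(P_{\textrm{SOC}})=val(P'_{\textrm{SOC}})$, as stated in its conclusion.

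Then I would invoke Theorem \ref{theorem:opf:zero:duality:gap:restricted}, which gives directly $val(P')=val(P'_{\textrm{SOC}})$. Chaining these three equalities,
\begin{equation*}
val(P) \;=\; val(P') \;=\; val(P'_{\textrm{SOC}}) \;=\; val(P_{\textrm{SOC}}),
\end{equation*}
establishes the claim.

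There is no real obstacle here; the theorem is essentially a corollary. The only subtle point worth flagging in the proof is to verify once that one can freely add or remove the auxiliary block $(\slin_0,\vlin,\Slin)$ without altering the objective value (because the cost does not involve these variables and, conversely, given any feasible $y$ for $(P)$ or $(P_{\textrm{SOC}})$, the linear system \eqref{eq:constr:BFM:voltage:slack:lin:multistage}--\eqref{eq:constr:BFM:voltage:prop:lin:multistage} determines $(\slin_0,\vlin,\Slin)$ uniquely and preserves non-anticipativity, since these auxiliary variables are measurable functions of the non-anticipative $s$). This ensures the equalities of values, not merely inequalities, and closes the argument. \qed
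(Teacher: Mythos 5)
Your proof is correct and follows the same route as the paper: the paper presents this theorem as an immediate consequence of Proposition \ref{prop:suff:cond:equal:p:restricted} (which gives $val(P)=val(P')$ and $val(P_{\textrm{SOC}})=val(P'_{\textrm{SOC}})$) and Theorem \ref{theorem:opf:zero:duality:gap:restricted} (which gives $val(P')=val(P'_{\textrm{SOC}})$), exactly the chain of equalities you write. Your added remark about the auxiliary block $(\slin_0,\vlin,\Slin)$ not entering the cost and preserving non-anticipativity is a fair, if unstated in the paper, detail.
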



\begin{remark}\label{remark:generalization:storage:constraints}
	The result can be generalized to other types of electricity storage systems, thermal storage systems, electrical vehicles\dots 
	Other constraints, static or dynamic (i.e., linking variables of two distinct time steps), can be incorporated to power injections at all buses except the slack bus $0$.
\end{remark}

\section{Discussion on applicability to real-world distribution networks}\label{sec:discussion:real:networks}

\subsection{Validity of the results for uncontrollable Voltage Regulation Transformers (VRTs)}

{For flexibility of the model, we can also incorporate an ideal transformer for each bus of the network, which tap ratio is given by $t_i \in \mathbb{R}^*_+$ ($t_i=1$ if no such transformer is present at bus $i$). Doing so allows for instance to model VRTs with fixed configuration \cite{liu:li:wu:orte:17}. To this end, for each bus $i$, we introduce an additional virtual bus $i-$. Buses $i$ and $i-$ are the two ends of the transformer, $i$ being closer to the root of the network by convention, see Figure \ref{fig:transformer}.}

\begin{figure}[h!]
	\centering
	\includegraphics[width=0.7 \textwidth]{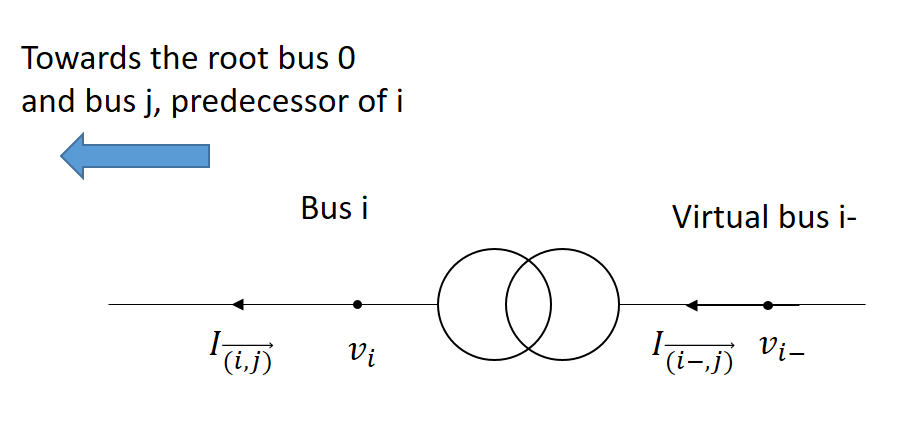}
	\caption{Transformer at bus $i$}\label{fig:transformer}
\end{figure}

{The voltage square magnitude at the virtual bus $i-$ is then given by $|t_i|^2 v_i$. In particular, ideal transformers only change voltage magnitude, but not voltage phasis, nor apparent power. In problems $(P)$ and $(P_{SOC})$, only \eqref{eq:constr:BFM:voltage:prop:multistage} is replaced by:
\begin{align}\label{eq:constr:BFM:voltage:prop:multistage:transfo}
	v_{i,t,\omega} - |t_j|^2 v_{j,t,\omega} = 2 \Real(z_{i,j}^* S_{\overrightarrow{(i,j)},t,\omega}) - |z_{i,j}|^2 \mathcal{I}_{\overrightarrow{(i,j)},t,\omega}.
\end{align}
In case the transformers are not controllable, i.e., the tap ratios are fixed, then the equation remains linear, and the problem remains of the same nature. In problems $(P')$ and $(P'_{SOC})$, additionally to the previous modification, \eqref{eq:constr:BFM:voltage:prop:lin:multistage} is replaced by:
\begin{align}\label{eq:constr:BFM:voltage:prop:lin:multistage:transfo}
\vlin_{i,t,\omega} - |t_j|^2\vlin_{j,t,\omega} = 2 \Real(z_{i,j}^* \Slin_{\overrightarrow{(i,j)},t,\omega}), \quad  \overrightarrow{(i,j)} \in \mathcal{E},  t \in \timeSet,  \omega \in \Omega.
\end{align}
This equation remains linear if the tap ratios of the transformers are fixed. Moreover, the last equation in \eqref{eq:bound:vlin:slin:system} is replaced by:
\begin{align*}
\vlinbar_{i,t,\omega} - |t_j|^2 \vlinbar_{j,t,\omega} = 2 \Real(z_{i,j}^* \Slinbar_{\overrightarrow{(i,j)},t,\omega}), \quad  &\overrightarrow{(i,j)} \in \mathcal{E}.
\end{align*}
Propositions \ref{prop:suff:cond:equal:p:restricted} and \ref{prop:no:reverse:power:flow} remain valid in the setting of uncontrollable transformers, by small adaptations of the proofs. Line 12 of Algorithm \ref{algo:forward:backward:pass} is modified as:
\begin{align*}
v'_{i,t,\omega} \leftarrow |t_j|^2 v'_{j,t,\omega} + 2 \Real(z_{i,j}^* S'_{\overrightarrow{(i,j)},t,\omega}) - \vert z_{i,j} \vert^2 \mathcal{I}'_{\overrightarrow{(i,j)},t,\omega}.
\end{align*}
Lemmas \ref{lemma:comparison:ac:linear}, \ref{lemma:monotone:algo:opf}, Corollary \ref{corollary:opf:rounding}, Theorems \ref{theorem:opf:zero:duality:gap:restricted}, \ref{theorem:a:posteriori:bound:relaxation:gap} and \ref{theorem:opf:zero:duality:gap} remain valid in the case of uncontrollable transformers.
}

\subsection{Extension to controllable VRT}
{Considering tap ratios of transformers as decision variables introduce non-linearity (and non-convexity) in Constraints \eqref{eq:constr:BFM:voltage:prop:multistage:transfo} and \eqref{eq:constr:BFM:voltage:prop:lin:multistage:transfo}. Extension of our results to this case is an interesting perspective of our work left for future research. Let us provide some intuition on a possible modeling procedure though, inspired by \cite{liu:li:wu:orte:17,liu:li:wu:18}. If the tap ratios take value in a continuous interval $[t_i^{\tt min}, t_i^{\tt max}] \subset (0, + \infty)$, then $(P)$ (resp. $(P_{SOC})$) is unchanged up to introduction to additional decision variables $w_{\overrightarrow{(i,j)},t,\omega}$ for all $\overrightarrow{(i,j)} \in \mathcal{E}$, $t \in \timeSet$, $\omega \in \Omega$ and Constraint \eqref{eq:constr:BFM:voltage:prop:multistage} is replaced by:
\begin{align*}
	&v_{i,t,\omega} - w_{j,t,\omega} = 2 \Real(z_{i,j}^* S_{\overrightarrow{(i,j)},t,\omega}) - |z_{i,j}|^2 \mathcal{I}_{\overrightarrow{(i,j)},t,\omega}, \quad & \overrightarrow{(i,j)} \in \mathcal{E}, t \in \timeSet, \omega \in \Omega,\\
	&|t^{\tt min}_i|^2 v_{i,t,\omega} \leq w_{i,t,\omega}\leq |t^{\tt max}_i|^2 v_{i,t,\omega}, \quad & i \in \mathcal{B}, t \in \timeSet, \omega \in \Omega.
\end{align*}
For $(P')$ and $(P'_{SOC})$, one needs other additional decision variables $w^{\textrm{Lin}}_{\overrightarrow{(i,j)},t,\omega}$ and Constraint \eqref{eq:constr:BFM:voltage:prop:lin:multistage} is replaced by:
\begin{align*}
&\vlin_{i,t,\omega} - w^{\textrm{Lin}}_{i,t,\omega} = 2 \Real(z_{i,j}^* \Slin_{\overrightarrow{(i,j)},t,\omega}), \quad & \overrightarrow{(i,j)} \in \mathcal{E}, t \in \timeSet, \omega \in \Omega,\\
&|t^{\tt min}_i|^2 \vlin_{i,t,\omega} \leq w^{\textrm{Lin}}_{i,t,\omega}\leq |t^{\tt max}_i|^2 \vlin_{i,t,\omega}, \quad & i \in \mathcal{B}, t \in \timeSet, \omega \in \Omega.
\end{align*}
Extension of our results to this setting require an adaptation of step 12 of Algorithm \ref{algo:forward:backward:pass} in order to give an explicit updating rule for $w$ and $v$.
In the case where the tap ratios of transformers are decision variables with value in (finite) discrete sets, it is possible to model $(P)$ and $(P')$ as Mixed-Integer Non-Linear Programming problems and $(P_{SOC})$ and $(P'_{SOC})$ and Mixed-Integer Second Order Cone Programming problems, see \cite{liu:li:wu:orte:17, liu:li:wu:18}. Our results can be applied in at the leaves of a branch-and-bound tree, when all tap ratios of VRTs are fixed.

\subsection{Other voltage regulation devices}
{Introducing capacitor banks in the model requires the modeling of nodal shunt elements, which requires in turn a modification of the Branch Flow Model \cite{fari:13}. In particular, \eqref{eq:constr:BFM:power:balance:multistage} is replaced by:
\begin{align}
& S_{\overrightarrow{(i,j)},t,\omega} = \sum_{\overrightarrow{(k,i)} \in \mathcal{E}} (S_{\overrightarrow{(k,i)},t,\omega} - z_{k,i} \mathcal{I}_{\overrightarrow{(k,i)},t,\omega}) + y_i^{\tt sh} v_{i,t,\omega} + s_{i,t,\omega}.\label{eq:constr:BFM:power:balance:multistage:shunt}
\end{align}
Similarly, \eqref{eq:constr:BFM:power:balance:lin:multistage} is replaced by:
\begin{align}
& \Slin_{\overrightarrow{(i,j)},t,\omega} = \sum_{\overrightarrow{(k,i)} \in \mathcal{E}} \Slin_{\overrightarrow{(k,i)},t,\omega} + y_i^{\tt sh} \vlin_{i,t,\omega} + s_{i,t,\omega}.\label{eq:constr:BFM:power:balance:lin:multistage:shunt}
\end{align}
Due to the additional dependency of $S$ and $\Slin$ on $v$ and $\vlin$ respectively, it remains unclear whether the key Lemmas \ref{lemma:comparison:ac:linear} and \ref{lemma:monotone:algo:opf} remain valid, even under some sign conditions on the real and imaginary part of their associated admittance. This is due to the fact that the proofs of these lemmas relied on the tree structure of the network and on \eqref{eq:constr:BFM:power:balance:multistage} and \eqref{eq:constr:BFM:power:balance:lin:multistage} being independent from $v$, which allowed to directly prove comparison relation for apparent power. Therefore, whether our results are valid or not with capacitor banks remains an open question. 
On the other hand, SVCs and STATCOMs can inject or absorb reactive power at the buses of the network and our model already allows this possibility. Such devices can thus be incorporated in the model while preserving our results.

\subsection{Extension to unbalanced multi-phase networks}
{In the multi-phase setting, the natural convex relaxation of the problem becomes a SDP relaxation \cite{liu:li:wu:orte:17, liu:li:wu:18,gan:low:14} and the non-convex equality constraints \eqref{eq:constr:BFM:non:convex:multistage} are replaced by rank-one constraints for 6 x 6 matrices for three-phase networks. Thus, we do not expect our fixed point procedure (consisting in repeating an adaptation of Algorithm \ref{algo:forward:backward:pass} until convergence) to yield solutions satisfying such rank conditions. Or at least, we do not expect to be able to prove it easily. This is left for further research.}

\section{Case studies and numerical illustration} \label{sec:opf:numerics}

This numerical study is implemented using Matlab R2018b combined
with YALMIP R20200116, interfaced with conic solver Gurobi 9.0.0 with an Intel-Core i7 PC at 2.1 GHz with 16 Go memory.

\subsection{Network topology}
We consider a distribution network on the Southern California Edison system with 56 buses \cite{fari:neal:clar:low:12}. A visualization of the network before relabeling the buses is given in Figure \ref{fig:network:viz:56}.

\begin{figure}[!h]
	\centering{\includegraphics[trim=2cm 2cm 2cm 1cm,clip,width=0.5\textwidth]{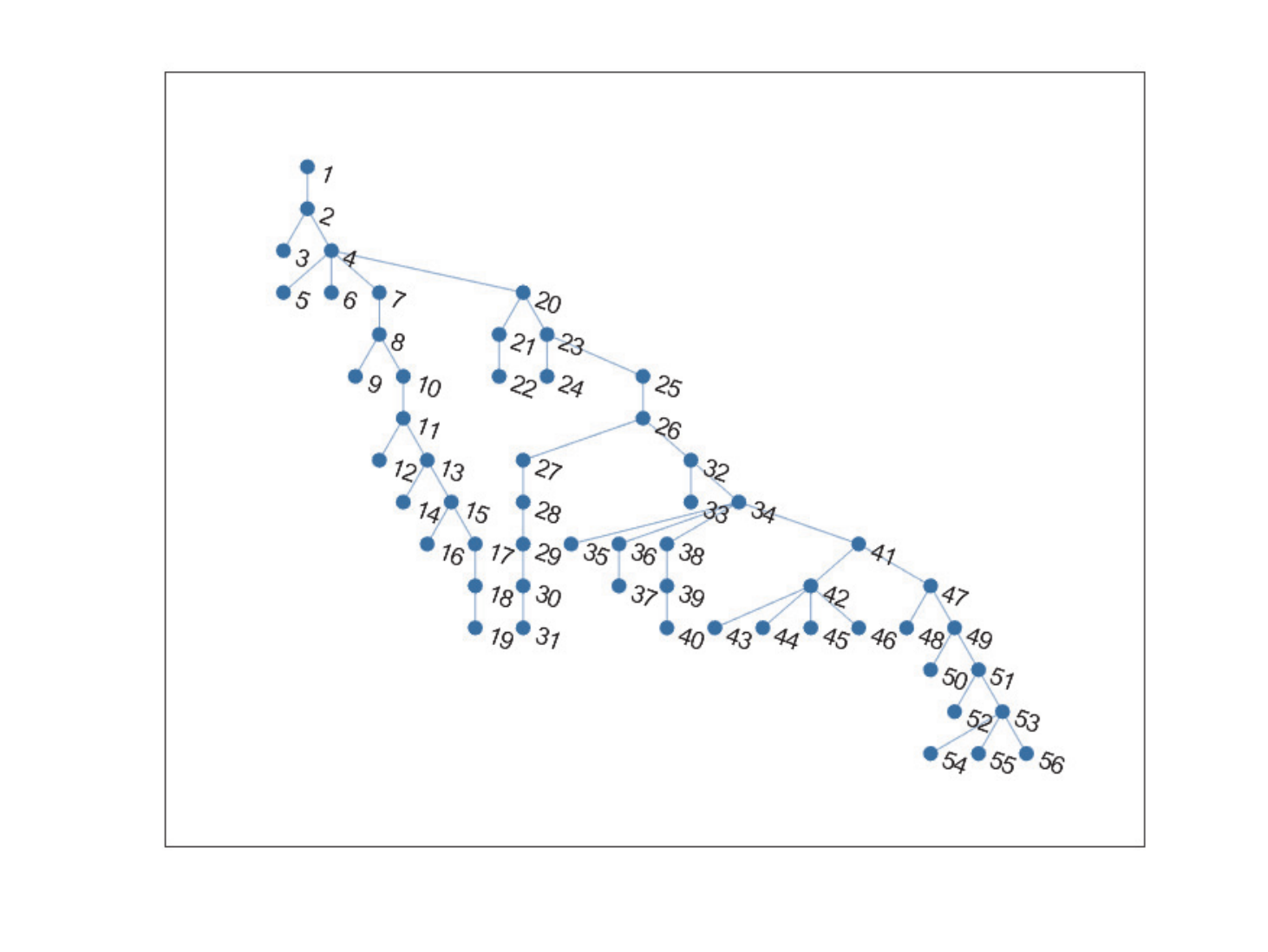}}
	\caption{56 buses network \cite{fari:neal:clar:low:12}\label{fig:network:viz:56}}
\end{figure}

We leave network topology and line resistances and reactances unchanged compared with \cite{fari:neal:clar:low:12}. The peak loads are assumed unchanged, but we modify the installed solar capacities and do not assume shunt capacitors at any bus. Additionally, we shall considered storage systems. We attribute to each bus $i$ a size parameter $S_i$, given by the peak load of the bus $i$, taken from the column Load data, Peak MVA. Non-specified buses are attributed the size $S_i = 0$. The values of the parameters of the network are given in Figure \ref{tab:network:param:56}. 
In particular, the total peak load is given by $\sum_i S_i = 3.835$ MVA.

\begin{figure*}[h!]
	\centering
	\includegraphics[width=\textwidth]{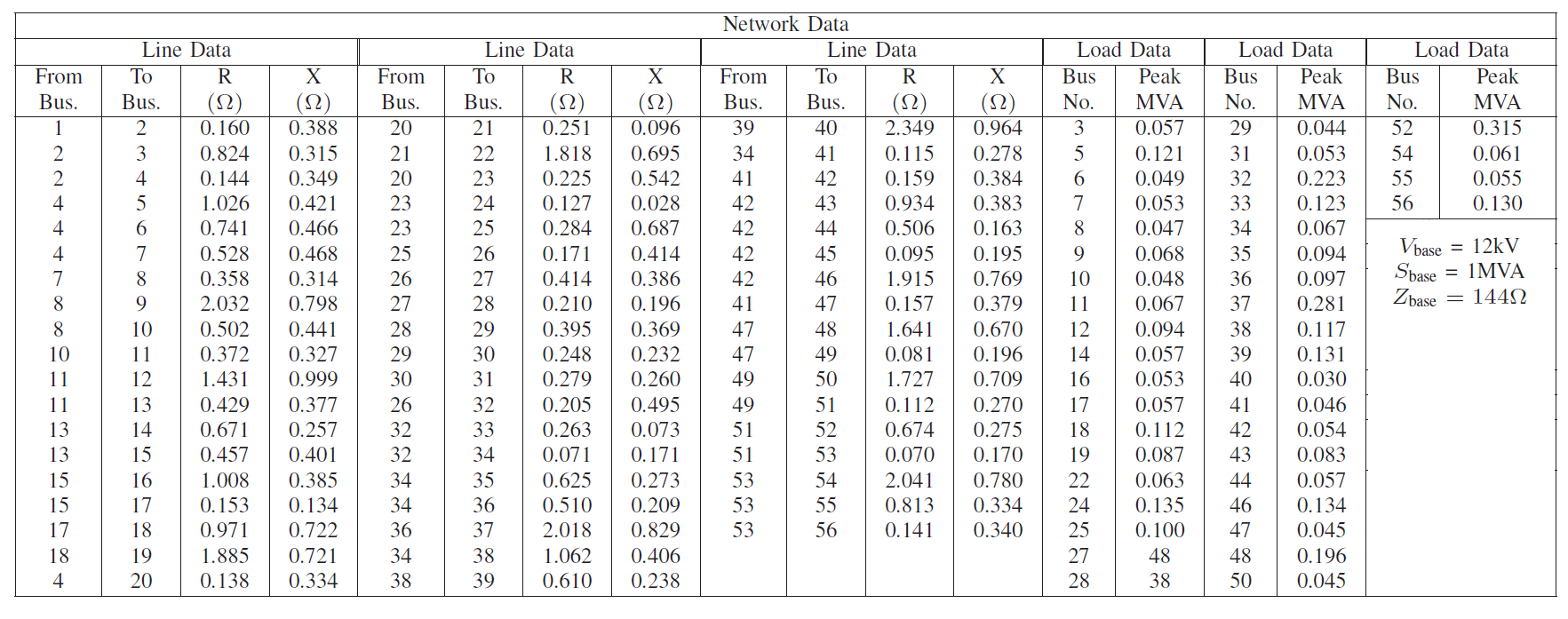}
	\caption{Table of the data of the 56 buses network \cite{fari:neal:clar:low:12}: line resistances, reactances, peak loads $S_i$}\label{tab:network:param:56}
\end{figure*}


We assume the maximal intensity magnitude in each line is $300$ A, which yields the bound $\overline{\mathcal{I}}_{\overrightarrow{(i,j)}} = 90000$ A$^2$. We accept $5 \%$ deviations of voltage magnitude from the reference value, i.e., $\underline{v}_{\overrightarrow{(i,j)}} = (0.95)^2$ p.u. and $\overline{v}_{\overrightarrow{(i,j)}} = (1.05)^2$ p.u. We assume also $\overline{S}_{\overrightarrow{(i,j)}} = 5$ MVA.



\subsection{Discussion about other electronics devices}

{
	More generally, incorporating devices which only modify the bounds on power injections at buses except the reference (like storage systems, flexible consumption, energy conversion systems...) do not make Theorem \ref{theorem:opf:zero:duality:gap:restricted} invalid. Besides, a priori conditions guaranteeing a vanishing relaxation gap (see Theorem \ref{theorem:opf:zero:duality:gap}) should be verified with the updated bounds on power injections. For devices which directly impact voltage or intensity magnitude or link them with power injections like shunt capacitors (see the model of \cite{fari:neal:clar:low:12}) or transformers, one should extend the analysis developed above to check if similar results may hold under further assumptions.}


\subsection{Exogenous residual demand}
We consider an exogenous demand profile at node $i$ and for time $t$ given by the difference between a deterministic consumption profile and a solar power production profile
$$s^{d}_{i,t,\omega} = s^{\textrm{cons}}_{i,t} - p^{\textrm{sol}}_{i,t,\omega}.$$
The consumption profiles are given by $s^{\textrm{cons}}_{i,t} = s^{\textrm{cons,ref}}_{t} \frac{1 + \textbf{i} 0.2}{\sqrt{1+ (0.2)^2}} S_i$, i.e., the consumption at each node $i$ is proportional to a reference deterministic evolution $s^{\textrm{cons,ref}}$ (independent from $i$) represented in Figure \ref{fig:conso} and the size parameter $S_i$.

\begin{figure}[h!]
	\centering
	\includegraphics[width=0.5\textwidth] {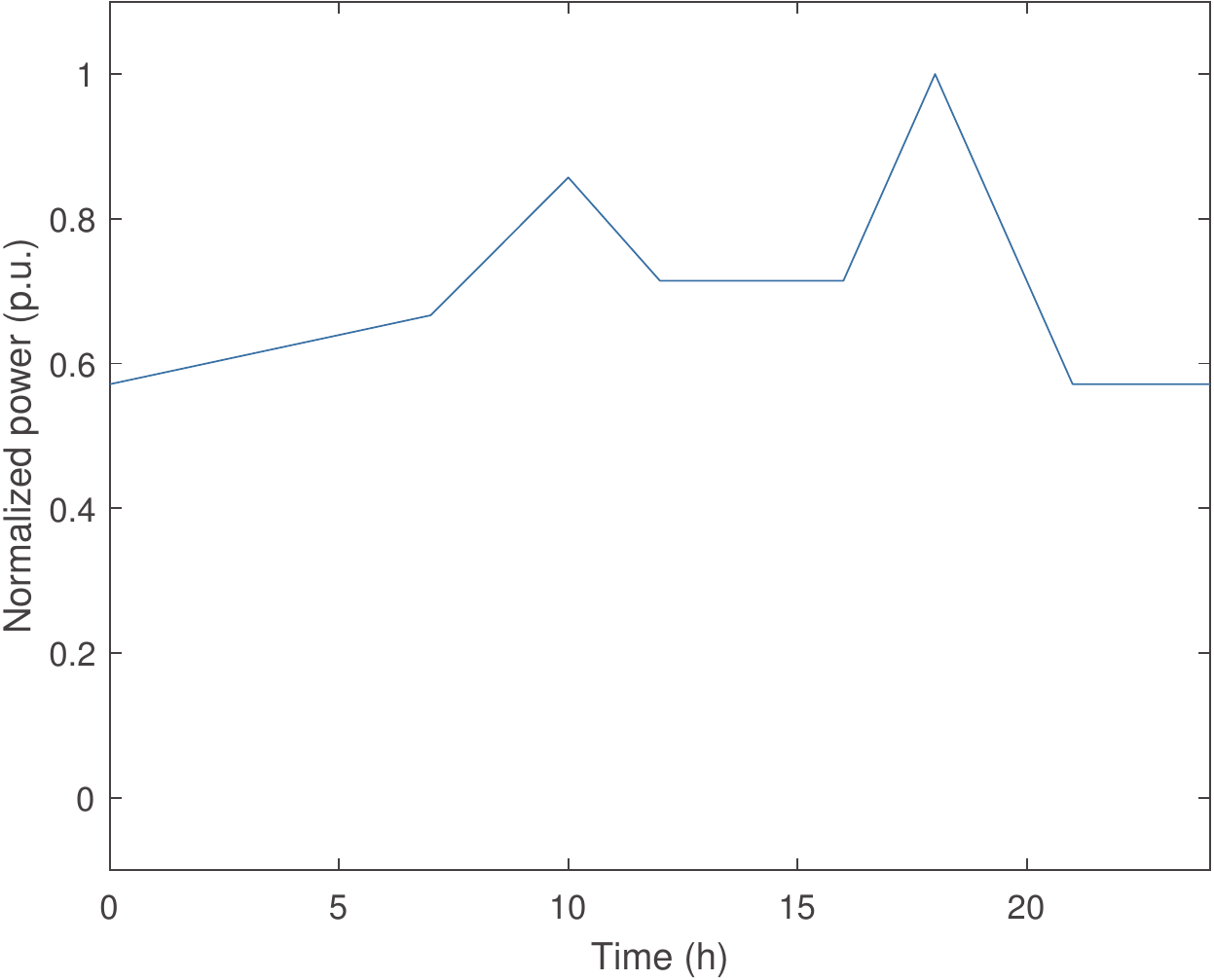}
	\caption{Normalized reference consumption profile $s^{\textrm{cons,ref}}$} \label{fig:conso}
\end{figure}

\subsection{Batteries}
{
We assume $p^{\textrm{inj}}_i$ (resp. $p^{\textrm{abs}}_i$) represents the power supplied (absorbed) by the battery at bus $i$, at time step $t \in \timeSet$, for scenario $\omega \in \Omega$.
We assume each battery can be charged/discharged in $\tau=$2 hours, i.e., $\overline{p}^{\textrm{inj}}_i = \overline{p}^{\textrm{abs}}_i = \frac{\overline{X}_i}{\tau}$ where $\overline{X}_i$ denotes the installed storage capacity at node $i$. We assume a charging efficiency $\rho^{\textrm{abs}}_i = 0.95$ while the discharging efficiency is $\rho^{\textrm{inj}}_i = 1 / \rho^{\textrm{abs}}_i$. They are the same for all buses. Additionally, we impose the periodicity constraints:
\begin{align}\label{eq:constr:cyclic:storage}
\forall i \in \busSet, \omega \in \Omega, \quad X_{i,T+1,\omega} = X_{i,1,\omega},
\end{align}
which ensure that the batteries will have the same states of charge on consecutive days. This allows us to take into account the daily repetition of the problem considered. In the light of Remark \ref{remark:generalization:storage:constraints}, this additional constraint does not jeopardize the earlier results of this paper. 
Besides, we assume the batteries cannot provide nor absorb reactive power.}


\subsection{Photovoltaic panels}
{The reactive power supplied by the solar panels $q^{\textrm{sol}}_{i,t,\omega}$ is a decision variable and we have the bound constraints:}
\begin{align*}
\forall i \in \busSet, t \in \timeSet, \omega \in \Omega, \quad - 0.3 \overline{p}^{\textrm{sol}}_i \leq q^{\textrm{sol}}_{i,t,\omega} \leq 0.
\end{align*}

\subsection{First use case: diffuse energy storage systems and solar production}\label{subsec:case:study:diffuse}

{We assume the total installed capacity of batteries is $\overline{X}^{\textrm{tot}} = 1$ MWh. 
Each bus $i \in \busSet$ is equipped with a battery with maximal energy capacity proportional to the peak load $S_i$ of the bus $\overline{X}_i = \frac{S_i}{\sum_{i \in \busSet} S_i} \overline{X}^{\textrm{tot}}$. The total installed solar capacity is denoted $\overline{p}^{\textrm{sol,tot}}$ and its value will be set to different levels later on. Each bus $i$ is equipped with photovoltaic panels with maximal capacity proportional to the size $S_i$ of the bus, i.e., $\overline{p}^{\textrm{sol}}_i = \frac{S_i}{\sum_{i \in \busSet} S_i} \overline{p}^{\textrm{sol,tot}}$. Let us show that for $\overline{p}^{\textrm{sol,tot}}$ inferior to some threshold value, \eqref{eq:bound:vlin:slin:system}-\eqref{eq:suff:cond:equal:p:restricted} hold. To this end, we consider the upper bound $\bar{s}_{i} = \frac{S_i}{\sum_{i \in \busSet} S_i} \left(\overline{p}^{\textrm{sol,tot}} + \frac{\overline{X}^{\textrm{tot}}}{\tau}\right) - 0.55 \frac{1 + \textbf{i} 0.2}{\sqrt{1+ (0.2)^2}} S_i$ on the power injections at bus $i$, valid for any time and any scenario tree, where we used the fact that $0.55 \leq \min_{t \in \timeSet} s^{\textrm{cons,ref}}_{t}$. Let us consider the linear program $(LP)$:

\begin{align*}
	\max_{\overline{p}^{\textrm{sol,tot}}, \Slinbar, \vlinbar}& \overline{p}^{\textrm{sol,tot}}\\
	s.t.\qquad & \Slinbar_{\overrightarrow{(i,j)}} = \sum_{\overrightarrow{(k,i)} \in \mathcal{E}} \Slinbar_{\overrightarrow{(k,i)}}  - 0.55 \frac{1 + \textbf{i} 0.2}{\sqrt{1+ (0.2)^2}} S_i\\
	& \qquad \qquad + \frac{S_i}{\sum_{i \in \busSet} S_i} \left(\overline{p}^{\textrm{sol,tot}} + \frac{\overline{X}^{\textrm{tot}}}{\tau}\right),  &\overrightarrow{(i,j)} \in \mathcal{E}, \\ 
	&\vlinbar_{0} = 1, \\
	&\vlinbar_{i} - \vlinbar_{j} = 2 \Real(z_{i,j}^* \Slinbar_{\overrightarrow{(i,j)}}),   &\overrightarrow{(i,j)} \in \mathcal{E}.\\
	&\vlinbar_{i} \leq \overline{v}_i,   &i \in \busSet,\\
	&\Real(z_{k,l}^* \Slinbar_{\overrightarrow{(i,j)}}) \leq 0 ,  &\overrightarrow{(i,j)} \in \mathcal{E},  \overrightarrow{(k,l)} \in \mathcal{E}_i.
\end{align*}
{The projection of the feasibility set of this optimization problem along the first component $\overline{p}^{\textrm{sol,tot}}$ is either empty, $\mathbb{R}$ if $val(LP) = + \infty$ or an interval of the form $(-\infty, val(LP)]$. By solving it numerically, we find $val(LP) = 1.7023$, which shows that for $\overline{p}^{\textrm{sol,tot}} \leq 1.7023$ MW, \eqref{eq:bound:vlin:slin:system}-\eqref{eq:suff:cond:equal:p:restricted} are satisfied and therefore, by Theorem \ref{theorem:opf:zero:duality:gap}, $(P)$ has no relaxation gap, provided that the assumptions of Theorem \ref{theorem:opf:zero:duality:gap:restricted} hold. This shows that the absence of a relaxation gap can be proved under more realistic assumptions than over-generation \cite{lava:12} or load over-satisfaction \cite{fari:13}, and in a general multi-stage stochastic setting, unlike \cite{huan:16} which considers a deterministic model. In particular, this condition does not depend on the time grid $(\tau_t)_t$ or on the scenario tree.
}
\subsection{Second use case: concentrated energy storage systems and solar production}

{In this second example, we assume that there is no battery and that only buses $7$ and $20$ are equipped with PV panels. The installed capacities are respectively denoted by $\overline{p}^{\textrm{sol}}_7$ and $\overline{p}^{\textrm{sol}}_{20}$.
We consider the upper bound $\bar{s}_{i} = \overline{p}^{\textrm{sol}}_i  - 0.55 \frac{1 + \textbf{i} 0.2}{\sqrt{1+ (0.2)^2}} S_i$ on the power injections at bus $i$, valid for any time and any scenario tree, where we used the fact that $0.55 \leq \min_{t \in \timeSet} s^{\textrm{cons,ref}}_{t}$. We want to maximize the installed solar capacity at both buses while guaranteeing an a priori vanishing relaxation gap by enforcing \eqref{eq:bound:vlin:slin:system}-\eqref{eq:suff:cond:equal:p:restricted} to hold. This yields the linear program $(LP')$:
}
\begin{align*}
\max_{\overline{p}^{\textrm{sol}}_7 ,\overline{p}^{\textrm{sol}}_{20}, \Slinbar, \vlinbar}& \overline{p}^{\textrm{sol}}_7 + \overline{p}^{\textrm{sol}}_{20},\\
s.t.\qquad & \Slinbar_{\overrightarrow{(i,j)}} = \sum_{\overrightarrow{(k,i)} \in \mathcal{E}} \Slinbar_{\overrightarrow{(k,i)}}  + \overline{p}^{\textrm{sol}}_i  - 0.55 \frac{1 + \textbf{i} 0.2}{\sqrt{1+ (0.2)^2}} S_i,  \qquad\overrightarrow{(i,j)} \in \mathcal{E}, \\ 
&\vlinbar_{0} = 1, \\
&\vlinbar_{i} - \vlinbar_{j} = 2 \Real(z_{i,j}^* \Slinbar_{\overrightarrow{(i,j)}}),   \qquad \overrightarrow{(i,j)} \in \mathcal{E}.\\
&\vlinbar_{i} \leq \overline{v}_i,   \qquad i \in \busSet,\\
&\Real(z_{k,l}^* \Slinbar_{\overrightarrow{(i,j)}}) \leq 0 ,  \qquad \overrightarrow{(i,j)} \in \mathcal{E},  \overrightarrow{(k,l)} \in \mathcal{E}_i,\\
&\overline{p}^{\textrm{sol}}_7 \geq 0,\\
&\overline{p}^{\textrm{sol}}_{20} \geq 0.
\end{align*}
{Then the maximal installed solar capacity is given by $\overline{p}^{\textrm{sol,tot}} = \overline{p}^{\textrm{sol}}_7 + \overline{p}^{\textrm{sol}}_{20} = 2.0851$ MW with $\overline{p}^{\textrm{sol}}_7 = 0.4399$ MW and $\overline{p}^{\textrm{sol}}_{20} = 1.6452$ MW. Therefore, for any values of $\overline{p}^{\textrm{sol}}_7 \leq 0.4399$ MW and $\overline{p}^{\textrm{sol}}_{20} \leq 1.6452$ MW, \eqref{eq:bound:vlin:slin:system}-\eqref{eq:suff:cond:equal:p:restricted} hold and therefore, by Theorem \ref{theorem:opf:zero:duality:gap}, $(P)$ has no relaxation gap, provided that the assumptions of Theorem \ref{theorem:opf:zero:duality:gap:restricted} hold. This is true for any choice of time grid and scenario tree, provided $s^{\textrm{cons,ref}}_{t} \geq 0.55$ for any $t \in \timeSet$.
}

\subsection{Numerical study of the upper bound on the relaxation gap}
{From now on, we numerically investigate the upper bound on the relaxation gap derived in Theorem \ref{theorem:a:posteriori:bound:relaxation:gap} on the first use case with diffuse energy storage systems and solar production, see \ref{subsec:case:study:diffuse}. We consider higher levels of installed solar capacity so that assumptions of Theorem \ref{theorem:opf:zero:duality:gap} do not hold anymore. We need to specify an optimization window and a scenario tree.}

\subsubsection{Time grid considered}

We consider an optimization window of $31$ hours, divided into $T+1 = 9$ sub-intervals. Each time step $t$ corresponds to a time interval in the model $[\tau_t, \tau_{t+1}]$. The correspondence between $t$ and $\tau_t$ is given in Table \ref{tab:time}. {One could consider instead a finer time grid, with step lengths of 15 minutes or 1 hour, which is most common in practice. However, as discussed below, the number of scenarios is generally exponential in the number of ``branching points'' of the scenario tree, hence, to avoid a blow up of the size of the optimization problem, one needs to use scenario trees branching at time steps from a coarser time grid. However, this coarse grid need not be uniformly distributed. Here, since the scenario tree provides a quantization of the randomness of solar production, we use 2-hours steps from 10 am to 6 pm, 3-hours steps from 7 to 10 am and from 9 pm to midnight the next day, and a single step of 7 hours for all the night from midnight to 7 am. We also emphasize that our theoretical results regarding the relaxation gap do not depend directly on the choice of time discretization parameters. }

\begin{table}[hbtp!]
	\centering
	\begin{tabular}{c||c|c|c|c|c|c|c|c|c|c}
		$t$ & $0$ & $1$ & $2$ & $3$ & $4$ & $5$ & $6$ & $7$ & $8$ & $9$ \\
		\hline
		Real time $\tau_t$ (h) & $0$ & $7$ & $10$ & $12$ & $14$ & $16$ & $18$ & $21$ & $24$ & $31$
	\end{tabular}
	\caption{Time steps and their corresponding time window}
	\label{tab:time}
\end{table}

\subsubsection{Generation of a scenario tree with i.i.d. scenarios for solar power production}\label{subsec:scenar:sol}

Recall that $\tau_t$ is the time associated with time step $t \in \timeSet$. 
We assume the solar power is given by:
\begin{align}\label{eq:solar:pow:decomp}
p^{\textrm{sol}}_{i,t,\omega} =\overline{p}^{\textrm{sol}}_i I^{\textrm{sol}}_{\tau_t,\omega} \overline{x}^{^{\textrm{sol,norm}}}_{\tau_t},
\end{align}
where the clear-sky index $I^{\textrm{sol}}$ is a random process taking values between $0$ and $1$, which models the clearness of the sky. Its value is $0$ when the sky is completely cloudy (i.e., even at day, there would be no light) and $1$ for a completely clear sky.
The deterministic time-dependent envelop $\overline{x}^{^{\textrm{sol,norm}}}$ models the time evolution of the normalized solar power we would observe if the sky were clear.
We suppose it is given by $\overline{x}^{^{\textrm{sol,norm}}}_{\tau} = 0$ for $\tau < T_{\textrm{day}} = 7$ and $\tau > T_{\textrm{night}}=21$ and by $$\overline{x}^{^{\textrm{sol,norm}}}_{\tau} = 0.5 - 0.5 \cos\left(\frac{2 \pi(\tau-T_{\textrm{night}})}{T_{\textrm{night}} - T_{\textrm{day}}}\right)$$
for $T_{\textrm{day}}\leq \tau \leq T_{\textrm{night}}$.


To build a scenario tree, we use the stochastic model in \cite{bado:gobe:gran:kim:17} for the clear-sky index $I^{\textrm{sol}}$, based on a Fisher-Wright-type Stochastic Differential Equation (SDE), given by:
\begin{align}\label{eq:sde:irradiance}
I^{\textrm{sol}}_{\tau} =I^{\textrm{sol}}_{0} -\int_0^{\tau} a (I^{\textrm{sol}}_s - I^{\textrm{ref}}) \dd s + \int_0^{\tau} \sigma (I^{\textrm{sol}}_s)^{\alpha}(1-I^{\textrm{sol}}_s)^{\beta} \dd B_s,
\end{align}
with $B$ a Brownian motion. The parameter $a \geq 0$ is a mean-reversion speed parameter, and $I^{\textrm{ref}} \in [0,1]$ is a reference value for the clear-sky index. Under the assumption $a \geq 0$, $\alpha,\beta \geq 0.5$, this SDE has a unique strong solution with values in $[0,1]$ almost surely, see \cite{bado:gobe:gran:kim:17}.

The branching structure of the scenario tree is characterized by pre-specified vector $(C_t)_{t \in \timeSet}$ where $C_t$ corresponds to the number of children nodes of a node at stage $t$. In particular, the total number of scenarios is given by $N = \prod_{t=1}^{T} C_t$. 
Given a structure of a scenario tree, we use the quantile-based Algorithm \ref{algo:scenario:tree} with parameters given in Table \ref{tab:parameters:sde:algo} to instantiate the values of solar irradiance at the nodes of the scenario tree. 
Each scenario is assigned probability $1/N$, which is consistent with the fact that we consider evenly spaced quantiles for the values of successors of each node.

\begin{table}[h!]
	\centering
	\begin{tabular}{c||c|c|c|c|c|c|c|c}
			Parameter &$I^{\textrm{ref}}$ & $a$ & $\sigma$ & $\alpha$ & $\beta$ & $I^{\textrm{sol}}_{0}$ & $M$ & $\tau^{\textrm{Euler}}$\\
			\hline
			Value & $0.75$ & $0.75 \ h^{-1}$ & $0.7$ &  $0.8$ & $0.7$ & $0.5$ & $10000$ & $ 0.1 h$
	\end{tabular}
\caption{Parameters of SDE of solar irradiance $I^{\textrm{sol}}$ and of Algorithm \ref{algo:scenario:tree}\label{tab:parameters:sde:algo}}
\end{table}

\begin{algorithm}[h!]
	\caption{Generation of scenario tree of solar irradiance $I^{\textrm{sol}}$}
	\label{algo:scenario:tree}
	\begin{algorithmic}
		{\small
			\STATE Given: $M$, $(C_t)_{t = 1,...,T}$, $\{\tau_1,...,\tau_T\}$, $\tau^{\textrm{Euler}}$.\\
			\STATE $val$: table of values of the nodes of scenario tree
			\STATE $val[n_1] \leftarrow I^{\textrm{sol}}_{init}$ with $n_1$ node at time $t=1$
			\FOR{$t= 2,...,T$}
			\FOR{$n_{t-1}$ node at stage $t-1$}
			\STATE Simulate $M$ i.i.d. trajectories of $I^{\textrm{sol}}$ in \eqref{eq:sde:irradiance} on $[\tau_{t-1}, \tau_t]$ conditionally to $I^{\textrm{sol}}_{\tau_{t-1}} = val[n_{t-1}]$ using Euler scheme with step $\tau^{\textrm{Euler}}$.
			\FOR {$i = 1...C_{t-1}$}
			\STATE Create $i^{th}$ child of node $n_{t-1}$, denoted $n_t$.
			\STATE Set $val[n_t]$ to quantile $\frac{100(2i-1)}{2C_{t-1}} \%$ of simulated values of $I^{\textrm{sol}}_{\tau_t}$.
			\ENDFOR
			\ENDFOR
			\ENDFOR
			\STATE Return $val$.
		}
	\end{algorithmic}
\end{algorithm}


We consider various scenario trees which approximate the stochastic process $I^{\textrm{sol}}$, where branching (i.e., $C_t >1$) occurs at the time steps where $\overline{x}^{\textrm{sol, norm}}_t$ is big. This is heuristically justified by the fact that instantaneous volatility of the solar power at time $t$ is directly proportional to $\overline{x}^{^{\textrm{sol,norm}}}_{\tau_t}$. The scenarios of normalized solar power $I^{\textrm{sol}} \overline{x}^{^{\textrm{sol,norm}}}$ are represented in Figure \ref{fig:scenarios:solar} for three scenario trees with respectively 1 scenario {($C_t = 1$ for all $t \in \timeSet$)}, 8 scenarios {($C_t =2$ for $\tau_t \in \{10,12,14\}$ and $C_t = 1$ else)} and 12 scenarios {($C_t =2$ for $\tau_t \in \{10,14\}$, $C_t = 2$ for $\tau_t = 12$ and $C_t = 1$ else)}.

\begin{figure}[h!]
	\centering
	\begin{subfigure}{ 0.3 \textwidth}
		\includegraphics[width = \textwidth]{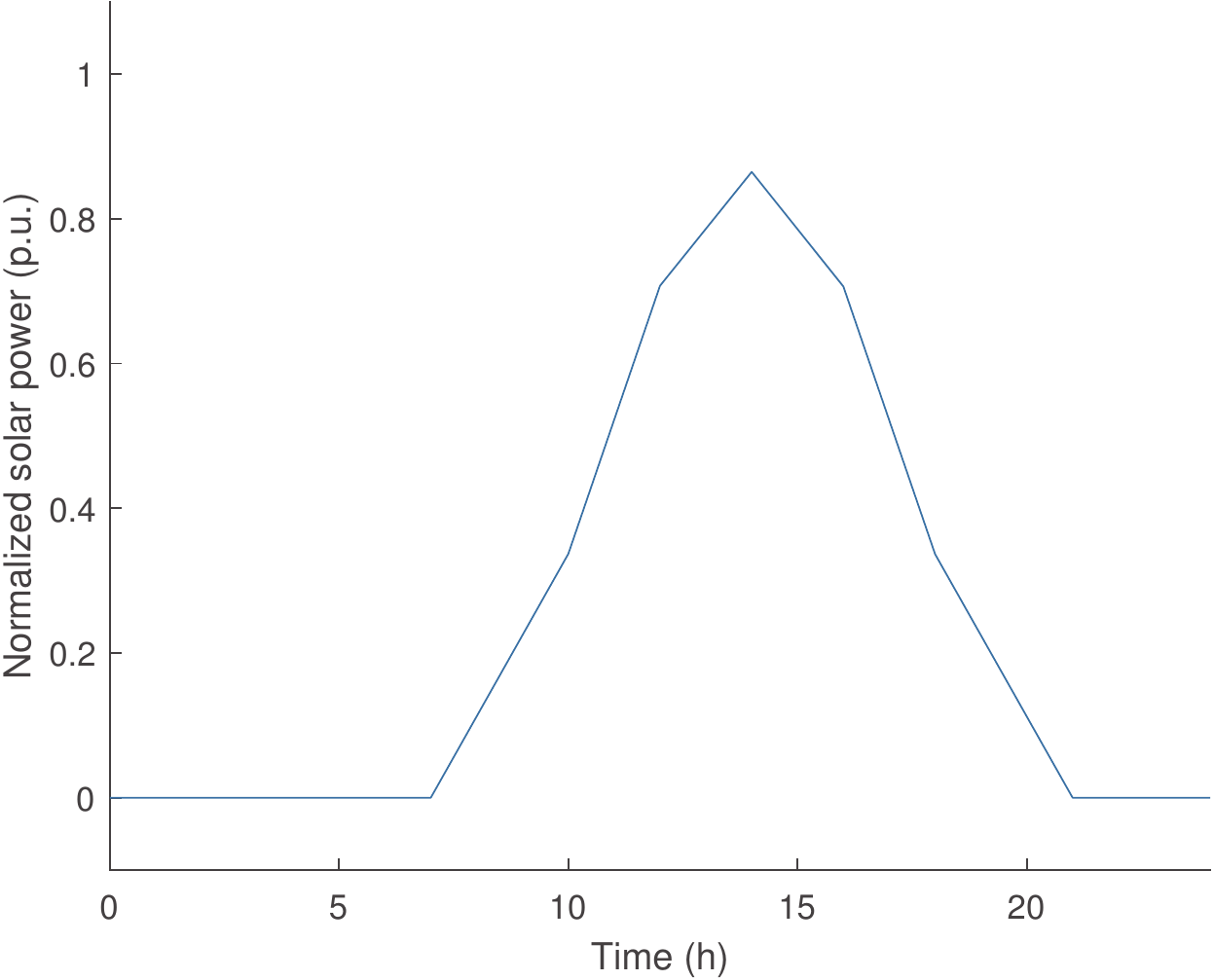}
	\end{subfigure}
	\begin{subfigure}{ 0.3 \textwidth}
		\includegraphics[width = \textwidth]{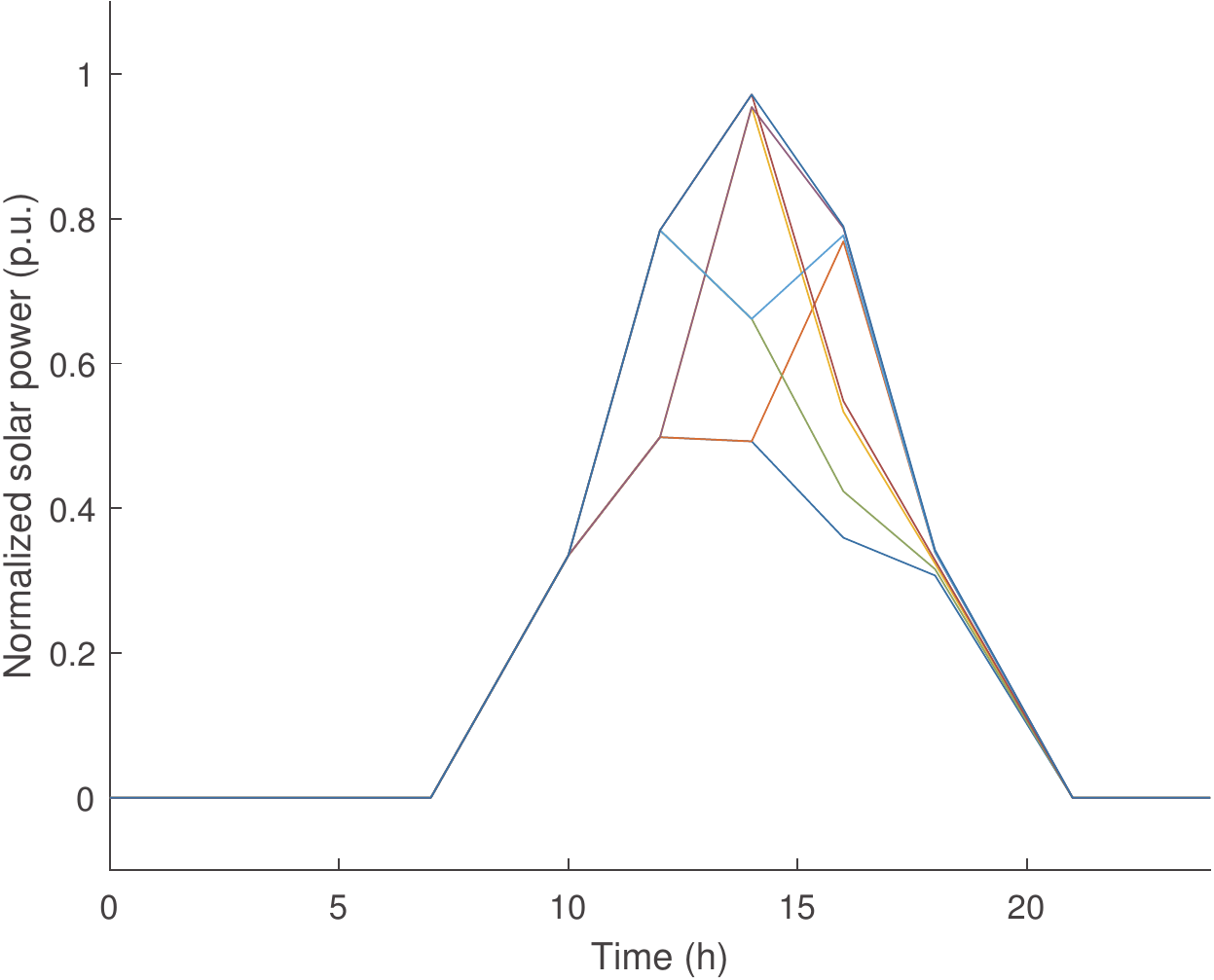}
	\end{subfigure}
	\begin{subfigure}{ 0.3 \textwidth}
		\includegraphics[width = \textwidth]{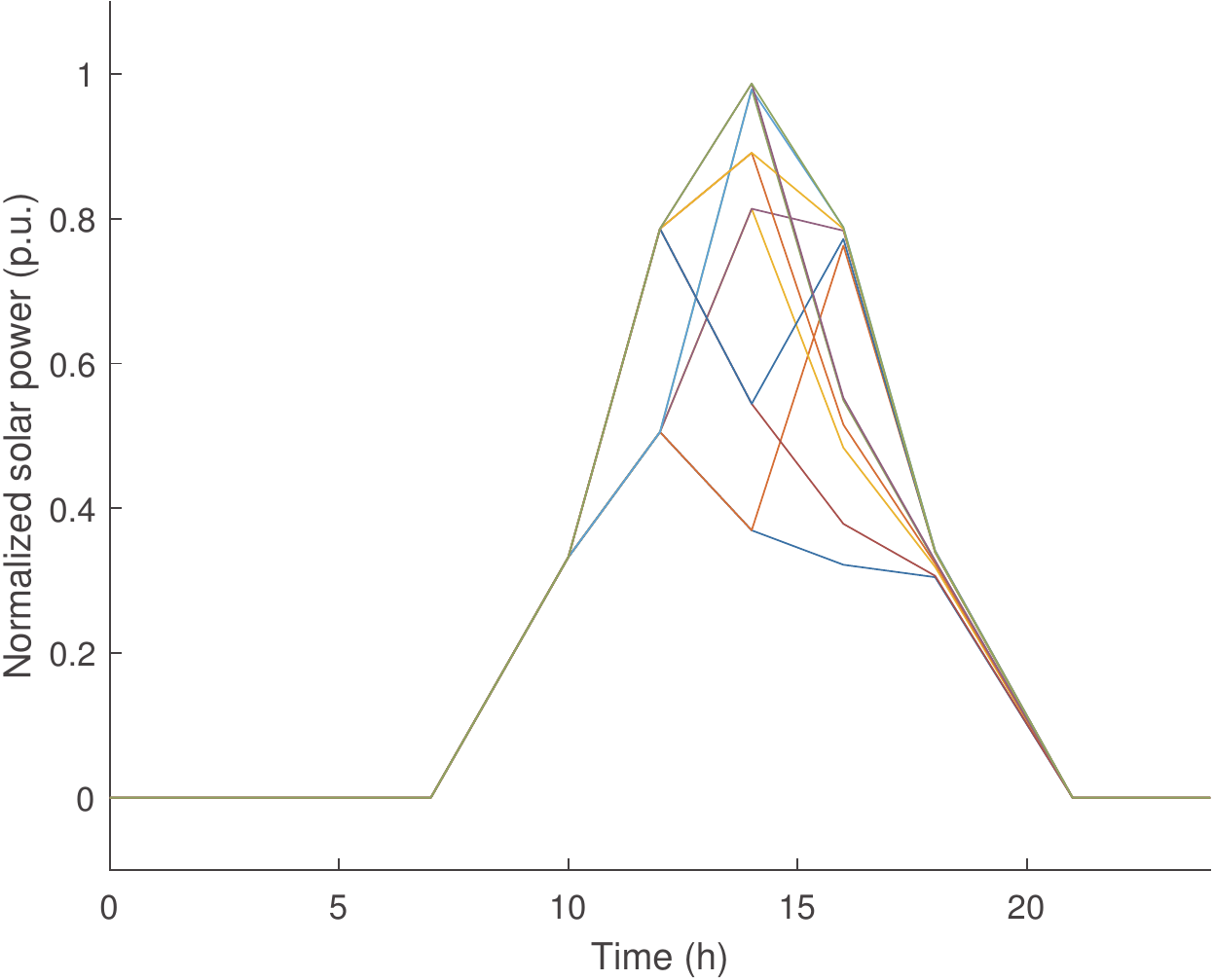}
	\end{subfigure}
	\caption{Scenario trees of daily normalized solar power production $\overline{x}^{^{\textrm{sol,norm}}} I^{\textrm{sol}}$ ($1$, $8$ and $12$ scenarios)} \label{fig:scenarios:solar}
\end{figure}

\subsubsection{Numerical results}

We assume that the cost functional is given by:
\begin{align}\label{eq:opf:cost:1}
&\frac{1}{N} \sum_{\omega =1}^N \sum_{t=0}^T \Delta_t\left(c_{0,+} (p_{0,t,\omega})_+  - c_{0,-} (p_{0,t,\omega})_-  +\sum_{\overrightarrow{(i,j)} \in \mathcal{E}}  c_{\textrm{loss}} r_{i,j} \mathcal{I}_{\overrightarrow{(i,j)},t ,\omega} \right) ,
\end{align}
{where $c_{0,+} = 1 \ \textrm{MW}^{-1}$ represents a marginal cost of importing electricity in the feeder considered from the public grid (connected to the feeder at bus $0$), $c_{0,-} = 0.5 \ \textrm{MW}^{-1}$ respectively represents a marginal gain when sending electricity back to the public grid and $c_{\textrm{loss}} = 2 \ \textrm{MW}^{-1}$ represents the marginal cost of thermal losses in the distribution network. 
}
We do not incorporate storage costs, which would require an estimation using a technical and economical analysis \cite{swam:17} or a specific mathematical model \cite{carp:chan:dela:riga:19}.


We consider the value $\overline{p}^{\textrm{sol,tot}} = 3$ MW. For this value, one cannot invoke Theorem \ref{theorem:opf:zero:duality:gap} guaranteeing a vanishing relaxation gap. Instead, we use Theorem \ref{theorem:a:posteriori:bound:relaxation:gap} to compute an a posteriori bound on the relative relaxation gap $\epsilon$ defined by:
\begin{align}\label{eq:bound:relative:relax:gap}
\epsilon = \frac{2 \left(val(P'_{\textrm{SOC}}) - val(P_{\textrm{SOC}})\right)}{\left|val(P_{\textrm{SOC}})\right| + \left|val(P'_{\textrm{SOC}})\right|}.
\end{align}

Table \ref{tab:opf:num:results:56} gives the bound on the relative relaxation gap as a function of the number of scenarios. Computation times corresponding to the optimization are also reported.

\begin{table}[h!]
	\centering
	{\begin{tabular}{c||c|c|c}
			number of scenarios N & $1$ & $8$ & $12$ \\
			\hline
			Bound on relative relaxation gap $\epsilon$ & $0$ & $4.5 \times 10^{-8}$ &  $1.3 \times 10^{-6}$ \\
			Optimization time $(P_{\textrm{SOC}})$ & $1.79$ s & $38.9$ s & $165.9$ s \\
			Optimization time $(P'_{\textrm{SOC}})$ & $1.72$ s & $50.3$ s & $153.2$ s
	\end{tabular}}
\caption{Bound on relative relaxation gap $\epsilon$ and computation time depending on the number of scenarios $N$ for the 56 buses network\label{tab:opf:num:results:56}}
\end{table}


For the three scenario trees considered, the bound on relative relaxation gap of $(P)$ is zero (up to a numerical tolerance). {The probability distributions of active power losses and power injections at bus $0$ over time of the solution of $(P_{\textrm{SOC}})$ (which is also feasible for $(P)$) are represented in the form of box-plots in Figures \ref{fig:boxplot:losses} and \ref{fig:boxplot:p0} respectively. We can notice than power losses and injections at bus $0$ decrease during the day, owing to the local solar production.}

\begin{figure}[h!]
	\centering
	\begin{subfigure}{ 0.3 \textwidth}
		\includegraphics[width = \textwidth]{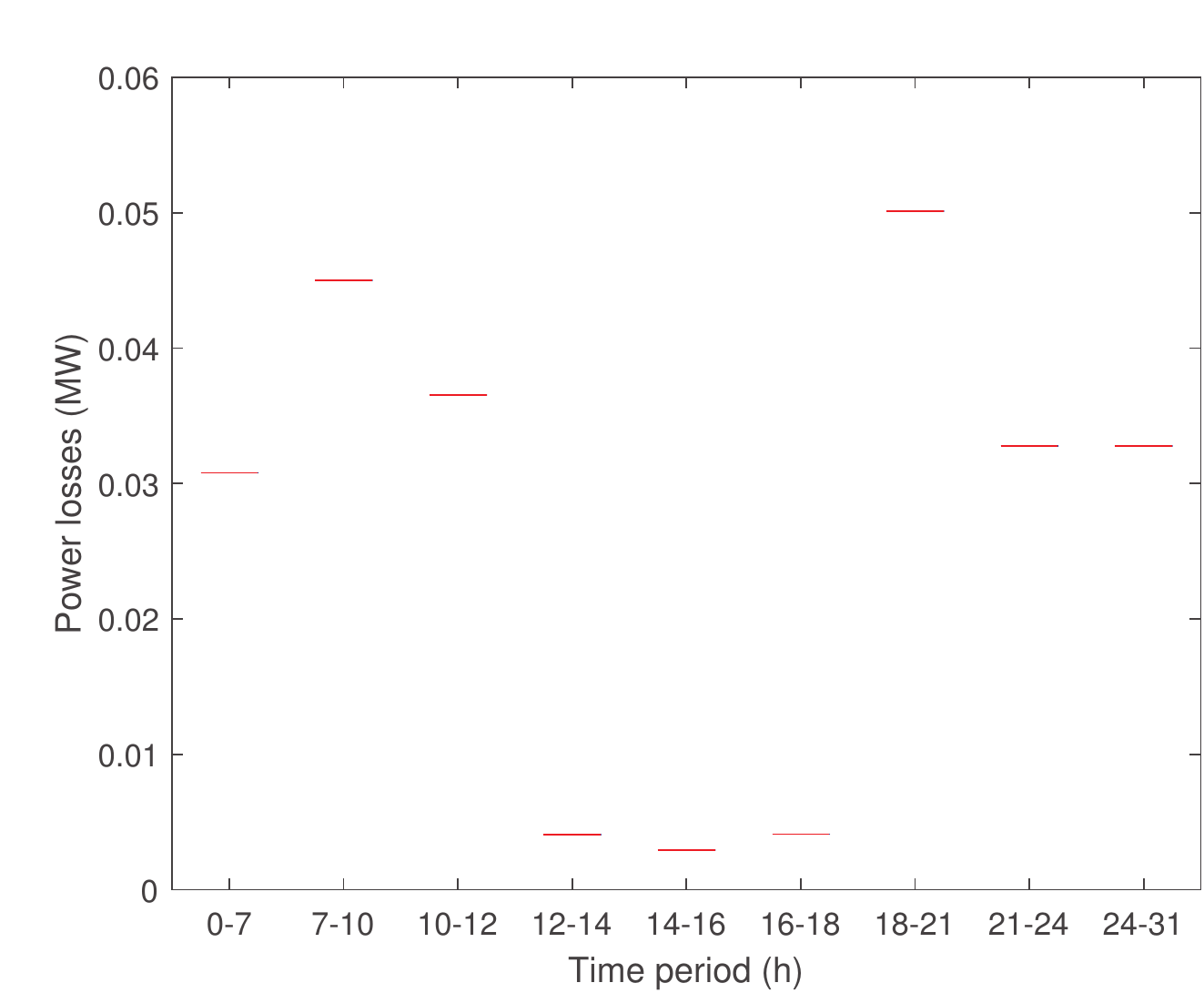}
		\subcaption{1 scenario}
	\end{subfigure}
	\begin{subfigure}{ 0.3 \textwidth}
		\includegraphics[width = \textwidth]{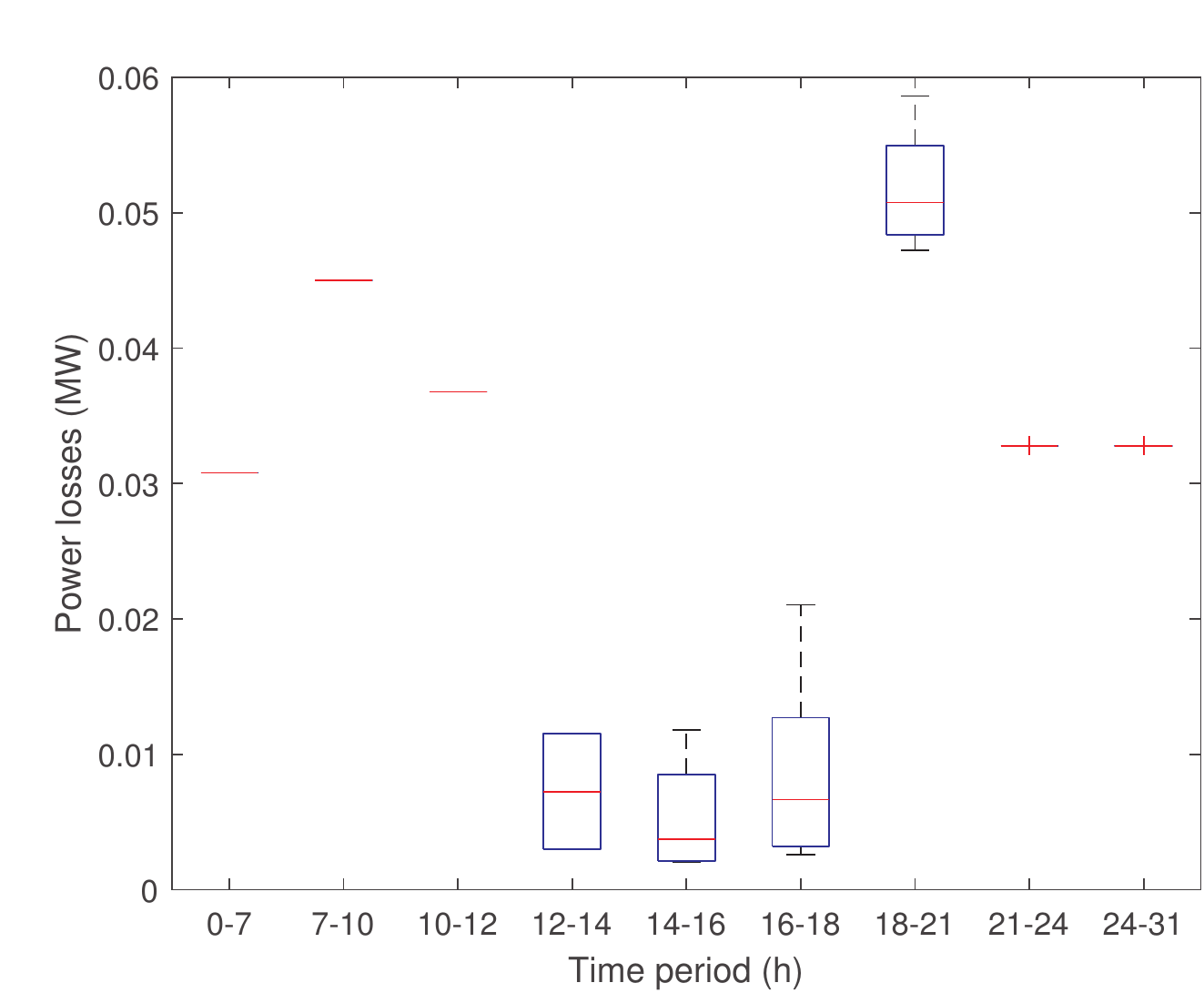}
		\subcaption{8 scenarios}
	\end{subfigure}
	\begin{subfigure}{ 0.3 \textwidth}
		\includegraphics[width = \textwidth]{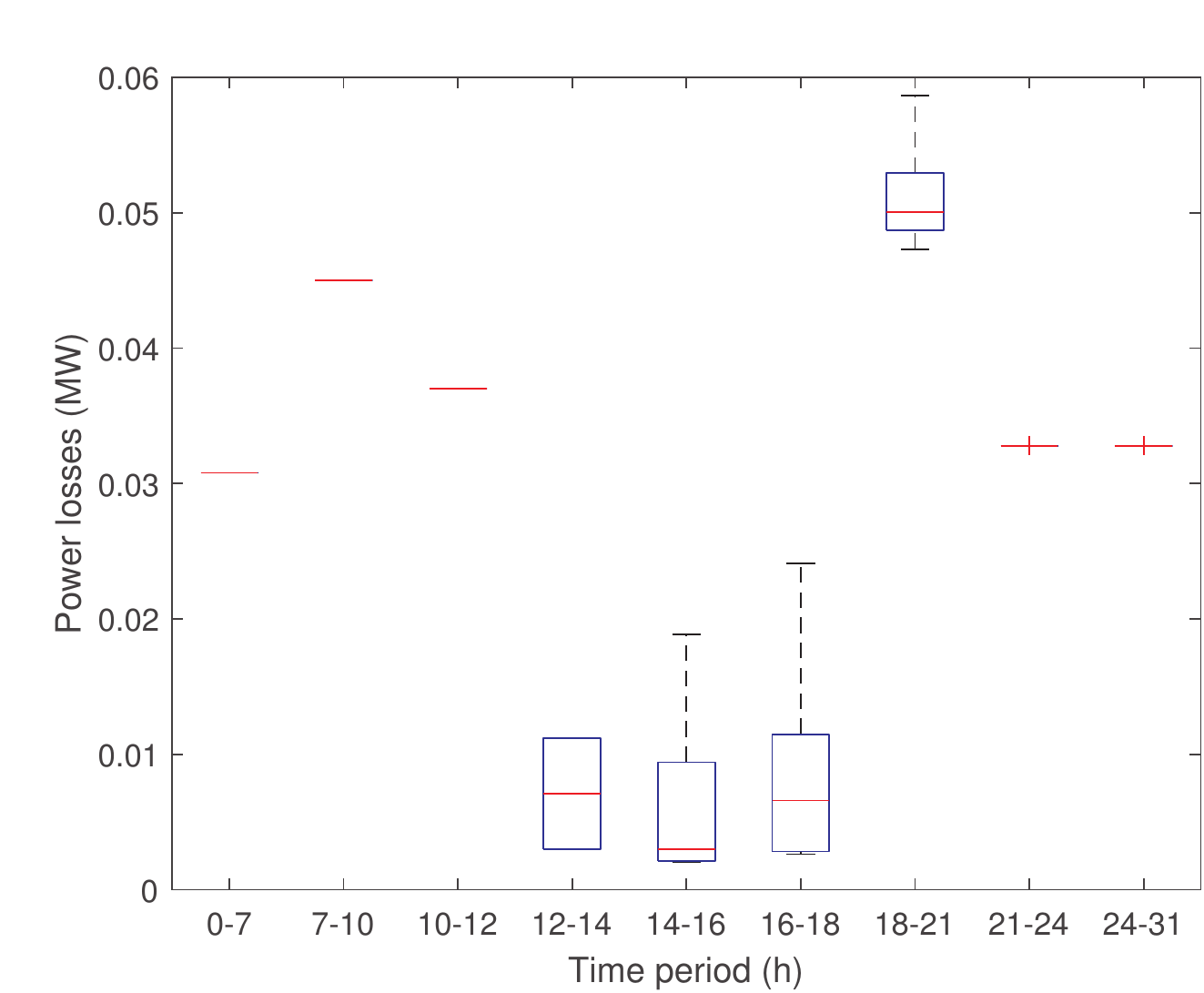}
		\subcaption{12 scenarios}
	\end{subfigure}
	\caption{Empirical distribution of total active losses in the network over time} \label{fig:boxplot:losses}
\end{figure}

\begin{figure}[h!]
	\centering
	\begin{subfigure}{ 0.3 \textwidth}
		\includegraphics[width = \textwidth]{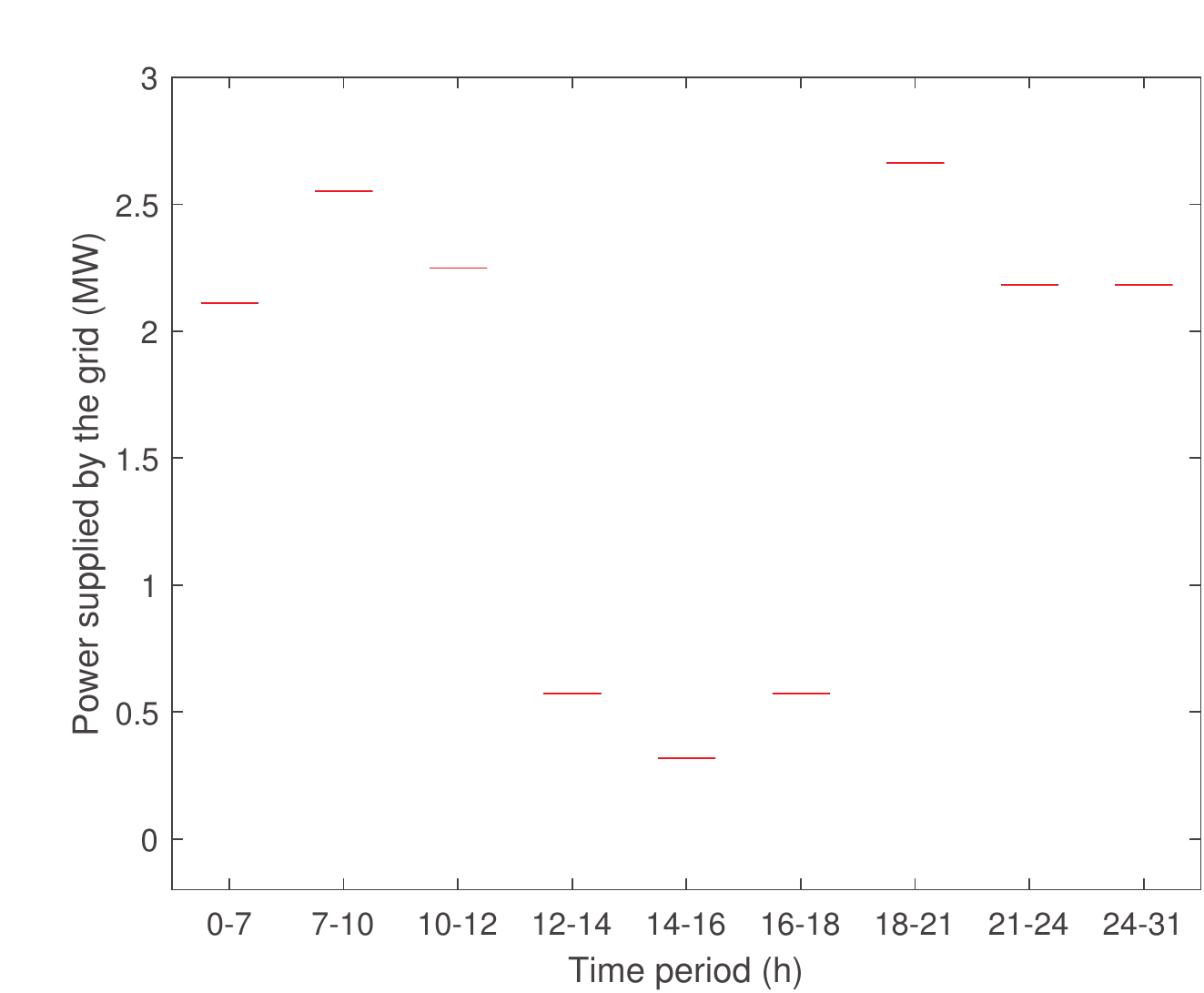}
		\subcaption{1 scenario}
	\end{subfigure}
	\begin{subfigure}{ 0.3 \textwidth}
		\includegraphics[width = \textwidth]{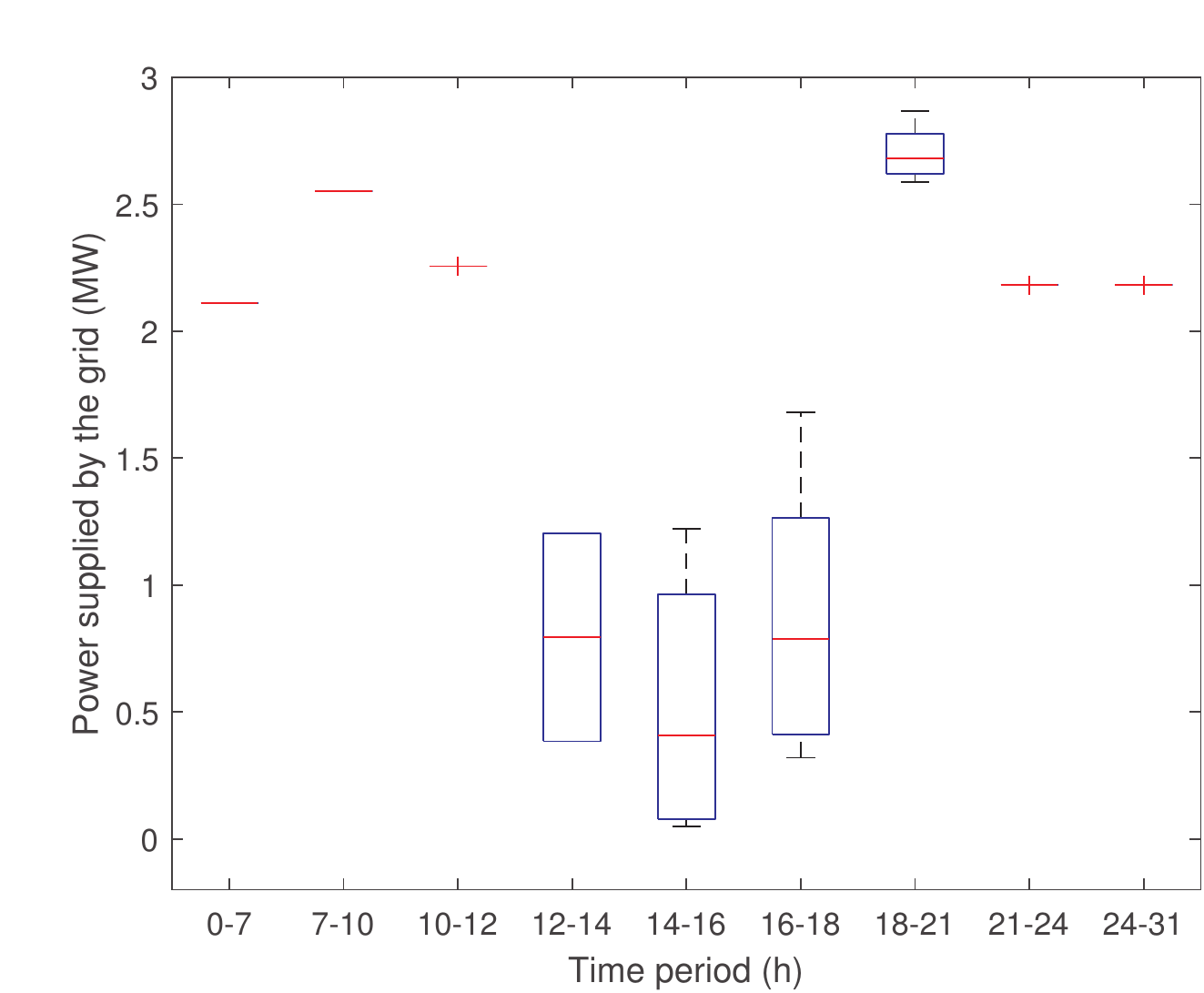}
		\subcaption{8 scenarios}
	\end{subfigure}
	\begin{subfigure}{ 0.3 \textwidth}
		\includegraphics[width = \textwidth]{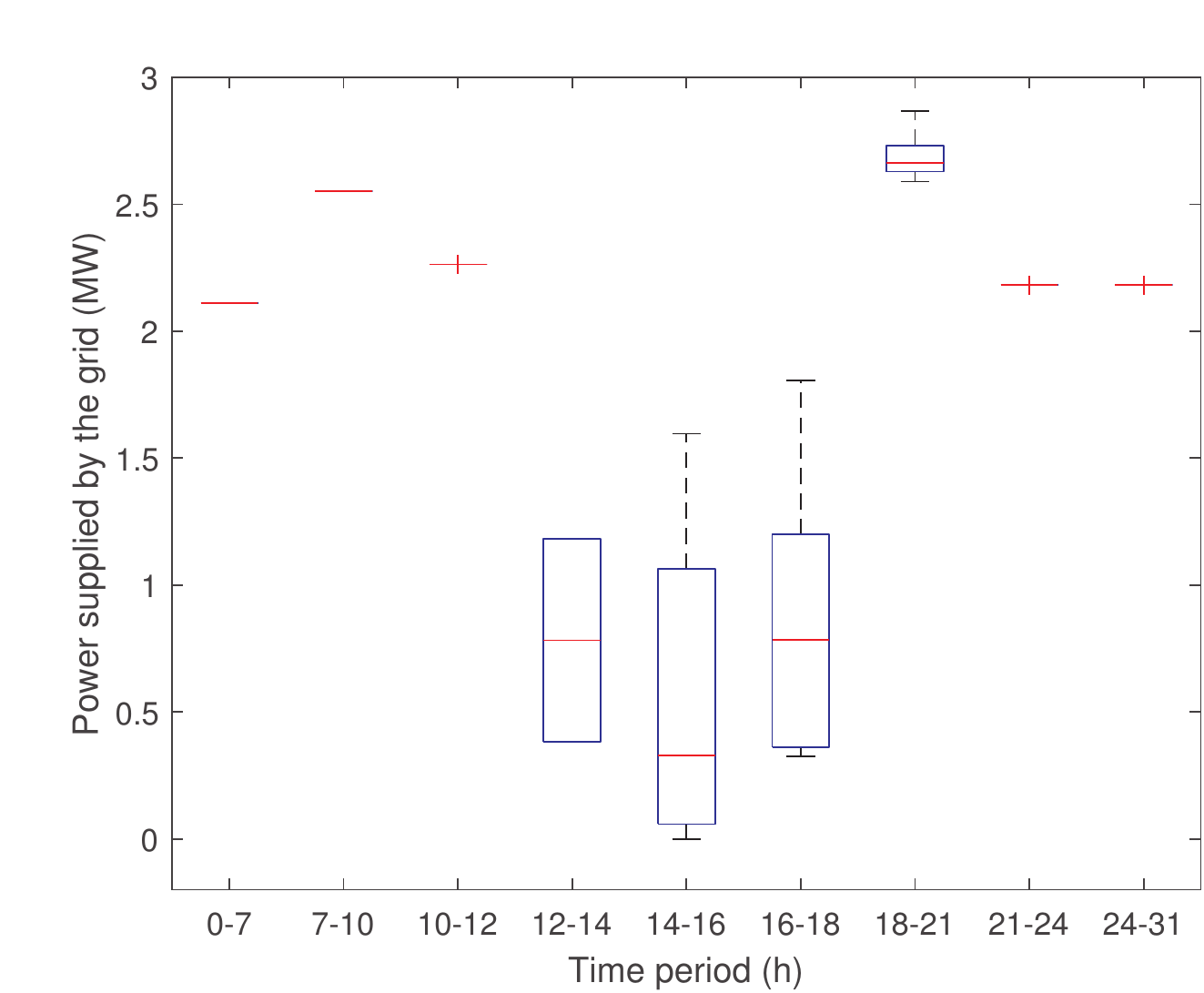}
		\subcaption{12 scenarios} 
	\end{subfigure}
	\caption{Empirical distribution of active power injections at bus $0$ over time} \label{fig:boxplot:p0}
\end{figure}

We now exhibit typical cases where the a posteriori bound on the relaxation gap can be expected to be good (i.e., close to the true value of the relaxation gap) or not. To this end, we consider a deterministic case with different values for the storage cost $c_{\textrm{bat}}$ and the total installed solar capacity $\overline{p}^{\textrm{sol,tot}}$. We consider the cost functional:
\begin{align}\label{eq:opf:cost:2}
&\sum_{t=0}^T \Delta_t\left(c_{0,+} (p_{0,t})_+  - c_{0,-} (p_{0,t})_- +  \sum_{\overrightarrow{(i,j)} \in \mathcal{E}} c_{\textrm{loss}}  r_{i,j} \mathcal{I}_{\overrightarrow{(i,j)},t}  + \sum_{i \in \busSet}c_{\textrm{bat}} (p^{\textrm{inj}}_{i,t} + p^{\textrm{abs}}_{i,t}) \right).
\end{align}

We give in Table \ref{tab:opf:num:results:2} the values of upper bound on the relative relaxation gap $\epsilon$ defined in \eqref{eq:bound:relative:relax:gap}, as a function of the installed solar capacity and storage cost.

\begin{table}[h!]
	\centering
	\begin{tabular}{c||c|c|c|c|c}
		$\overline{p}^{\textrm{sol,tot}}$ & $1.5$ MW & $3$ MW & $3.5$ MW & $4$ MW & $4.5$ MW  \\
		\hline
		$c_{\textrm{bat}} = 0 \ \textrm{MW}^{-1}$ & $0$ & $0$ & $7.7 \times 10^{-6}$ &  $4.0 \times 10^{-4}$ & $+\infty$ \\
		$c_{\textrm{bat}}=1 \ \textrm{MW}^{-1}$ & $3.7 \times 10^{-8}$ & $0$ & $7.2 \times 10^{-4}$ & $6.2 \times 10^{-3}$ & $+\infty$ \\
		$c_{\textrm{bat}}=2 \ \textrm{MW}^{-1}$ & $0$ & $3.4 \times 10^{-7}$ & 
		$7.2\times 10^{-4}$ & $3.1 \times 10^{-2}$ & $+\infty$
	\end{tabular}
	\caption{A posteriori bound $\epsilon$ depending on storage cost and installed solar capacity (56 buses network)}
	\label{tab:opf:num:results:2}
\end{table}


One can show that for all the instances considered in Table \ref{tab:opf:num:results:2}, the optimal solution of $(P_{\textrm{SOC}})$ found is feasible for $(P)$, showing that the relaxation gap is zero in all cases investigated: $val(P) = val(P_{\textrm{SOC}})$. {Therefore, considering problem $(P'_{\textrm{SOC}})$ is not necessary to estimate the relaxation gap or to prove that it is null, but it allows to assess the quality of the bound $\epsilon$ on the relative relaxation gap: we aim at finding typical situations where the bound $\epsilon$ is close to the true value of the relative relaxation gap $0$, and situations where this is not the case. The bound $\epsilon$ is close to $0$ if the} installed solar capacity is low {($1.5$ MW)} in agreement with Theorem \ref{theorem:opf:zero:duality:gap} or intermediate {($3 MW$)}. For higher values of the installed solar capacity {($4$ MW)}, the a posteriori bound is of better quality for lower storage cost. This makes sense since the batteries can be leveraged to absorb power to avoid reverse power flow at low cost (guaranteeing \ref{eq:constr:BFM:additional:constr:multistage}). {Figure \ref{fig:battery:power} confirms this observation: it represents the total active power supplied by the storage systems over time for the optimal solution of the restricted problem $(P'_{\textrm{SOC}})$ for $c_{\textrm{bat}} = 2 \ \textrm{MW}^{-1}$ (to minimize battery use). When $\overline{p}^{\textrm{sol,tot}} = 3.5$ MW, the batteries are not used at all whereas for $\overline{p}^{\textrm{sol,tot}} = 4.5$ MW, the storage systems need to be employed in order to ensure that \eqref{eq:constr:BFM:additional:constr:multistage} hold.} For higher values of the installed solar capacity, the a posteriori bound $\epsilon$ becomes even infinite, although the relaxation gap of $(P)$ is still zero. In that case, the storage capacity is not sufficient to absorb power, and hence constraint \ref{eq:constr:BFM:additional:constr:multistage} cannot be satisfied, even using batteries. The numerical results show that the a posteriori bound on the relaxation gap is good in the case of low decentralized production capacity or if active or reactive power can be absorbed locally at low cost.

\begin{figure}[h!]
	\begin{subfigure}{ 0.49 \textwidth}
		\includegraphics[width = \textwidth]{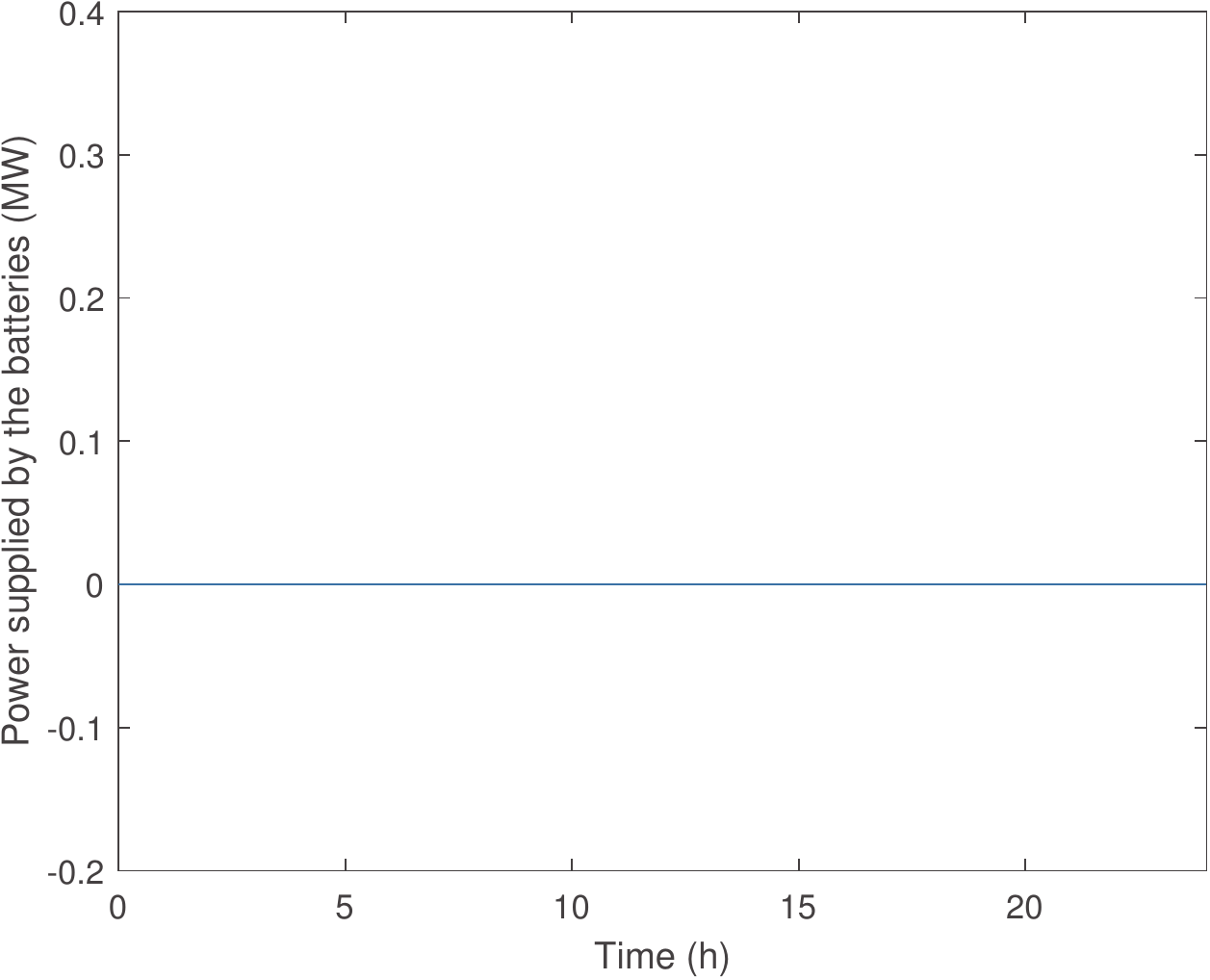}
		\subcaption{$\overline{p}^{\textrm{sol,tot}} = 3.5$ MW}
	\end{subfigure}
	\begin{subfigure}{ 0.49 \textwidth}
		\includegraphics[width = \textwidth]{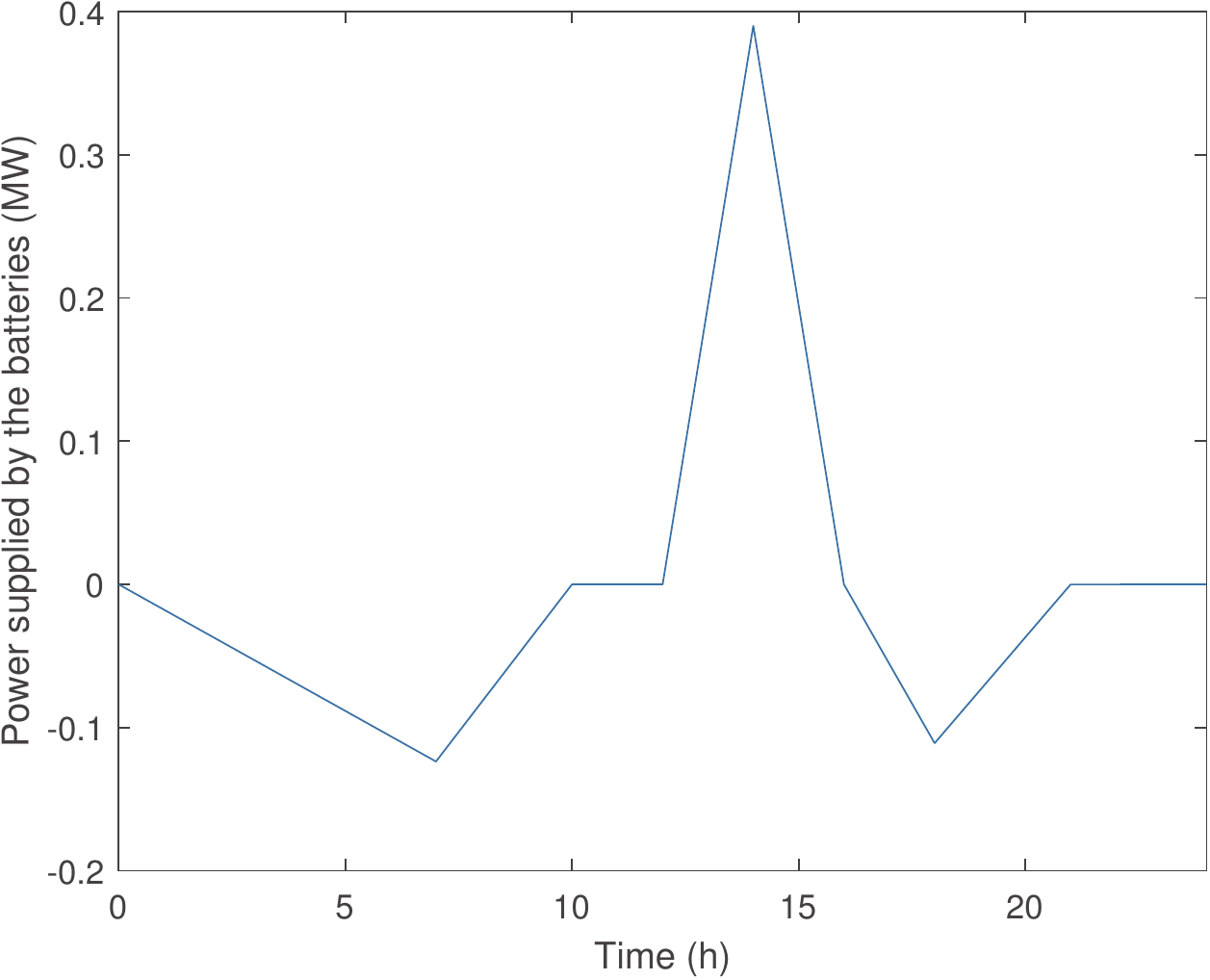}
		\subcaption{$\overline{p}^{\textrm{sol,tot}} = 4$ MW}
	\end{subfigure}
	\caption{Evolution of total active power supplied by batteries for $c_{\textrm{bat}} = 2 \ \textrm{MW}^{-1}$} \label{fig:battery:power}
\end{figure}

\section{Conclusion}
We have introduced a multi-stage stochastic AC OPF problem to account for both uncertainty of renewable production and dynamic constraints of storage systems. We have extended a result ensuring a vanishing relaxation gap under some realistic a priori conditions for the static deterministic AC OPF problem to the multi-stage stochastic setting. A similar procedure also yields an easily computable a posteriori upper bound on the relaxation gap of the problem.
These results are illustrated by numerical experiments on a realistic distribution network with local generation and storage systems. {Possible extensions of this work would be to consider three-phase unbalanced networks, controllable VRT and capacitor banks.}

\section*{Acknowledgments}
This work has benefited from several supports: Siebel Energy Institute (Calls for Proposals \#2, 2016), ANR project CAESARS (ANR-15-CE05-0024), Association Nationale de la Recherche Technique (ANRT), Electricit\'e De France (EDF), \emph{Finance for Energy Market} (FiME) Lab (Institut Europlace de Finance). The authors would like to thank Riadh Zorgati, Mathieu Caujolle and Bhargav Swaminathan for fruitful discussion. They also thank the reviewers for their detailed comments.

\bibliography{ArticleMicrogridMain}

\end{document}